\newtheorem{theorem}{Theorem}
\newtheorem{corollary}[theorem]{Corollary}
\newtheorem{lemma}[theorem]{Lemma}
\newtheorem{example}[theorem]{\it Example}
\newtheorem{proposition}[theorem]{Proposition}
\newtheorem{definition}[theorem]{Definition}
\newtheorem{remark}[theorem]{\it Remark}
\font\tenBb=msbm10 \font\sevenBb=msbm7 \font\fiveBb=msbm5
\def\Bb{\fam\Bbfam\tenBb}
\def\R{{\Bb R}}
\def\C{{\Bb C}}
\def\N{{\Bb N}}
\begin{document}
\title
[ Sharp extensions for  solutions of wave equations]{Sharp extensions for convoluted solutions of wave equations}

\author[P. J. Miana]{Pedro J. Miana}
\address{Departamento de Matem\'{a}ticas e I. U. M. A.
Universidad de Zaragoza, 50009 Zaragoza, Spain}
\email{pjmiana@unizar.es}

\thanks{}

\author[V. Poblete]{Ver\'onica Poblete}
\address{Universidad de Chile, Departamento de
Matem\'atica, Facultad de Ciencias, Las Palmeras 3425 \~{N}u\~{n}oa,
Santiago-Chile.}
 \email{vpoblete@uchile.cl}
\thanks{ The first author has been partly supported by Project MTM2010-16679, DGI-FEDER, of the MCYTS, Spain, Project E-64, D.G. Arag\'on, and JIUZ-2012-CIE-12, Universidad de Zaragoza, Spain. The second author is partially financed by  Proyecto Fondecyt 1110090}

 \begin{abstract}
In this paper we give sharp extensions of convoluted solutions of wave equations in abstract Banach spaces. The main technique is to use the algebraic structure, for convolution products $\ast$ and $\ast_c$,  of these solutions which are defined by a version of the Duhamel's formula. We define  algebra homomorphisms, for the convolution product $\ast_c$, from a certain set of test-functions  and apply our results to concrete examples of   abstract wave equations.
\end{abstract}

\date{}

\maketitle

\section{Introduction}
\setcounter{theorem}{0}
\setcounter{equation}{0}

We consider the following evolution problem
\begin{equation}\label{equa}
\begin{cases}
\displaystyle{\partial u\over \partial t}=\Delta u, &(t,x)\in \R^+ \times V\\
u(0,x)= u_0(x), & x\in V;\\
u(t, \cdot)\vert_{\partial V}= g(t, \cdot), & t>0.\\
\end{cases}
\end{equation}
\noindent with $V$ is an open set  in $\R^n$.
In \cite{[Du]},  Duhamel proposed the following formula to express the solution of problem (\ref{equa}).
$$
u(t,x)=\int_0^t{\partial \over \partial t}u(\lambda, t-\lambda,x)d\lambda, \quad t>0,
$$
where $u(\lambda,t,x)$ is a  solution of problem (\ref{equa}) for a particular function $g(\cdot, \lambda_0)$ fixed $\lambda_0$. Note that this formula reduces the Cauchy problem for an in-homogeneous partial differential equation to the Cauchy problem for the
corresponding homogeneous equation. This formula (known as Duhamel's principle) is used in partial differential equations and have been studied in  a large number of papers. We only mention here the paper \cite{DL} where an abstract point of way is treated to give   solutions of in-homogeneous differential equation in a Banach space $X$. Recently a fractional Duhamel's principle have been introduced to study  the Cauchy
problem for  inhomogeneous fractional  differential equations in \cite{Sa}.

Let $k:(0,T) \to \C$  a local integrable function, $X$ be a Banach space and $x\in X.$ The Cauchy problem
$$
\begin{cases}
\displaystyle{v'(t) = Av(t) + \int_0^tk(s)xds,} \quad &0<t<T,\\
 v(0) = 0,
\end{cases}
$$
is usual called $K$-convoluted Cauchy problem where $K(t)=\int_0^tk(s)x$ for $0<t<T$.
If there exists a solution of the abstract
Cauchy problem $u'(t) = Au(t)$ for $ 0<t < T,$ $u(0) = x$
then, as usual for a nonhomogeneous equation, then we have $v=u\ast K$ with $\ast$ is the usual convolution in $\R^+$.
The theory of local $k$-convoluted semigroup was introduced in \cite{C, CL} and extends the classical theory of $C_0$-semigroups, see a complete treatment in \cite[Section 1.3.1]{[MF]}, \cite[Chapter 2]{ko} and  other details \cite{KLM, K1, [KP]} and reference in. One of the interesting consequence of this theory is the extension property of the solution of  $K_1$-convoluted problem, using a version of Duhamel's formula, which increases  the regularity of the problem, see \cite{CL,ko}.

Now we consider the $K$-convoluted second order Cauchy problem (or $K$-convoluted wave equation),
\begin{equation}\label{(ACP_2)}
\begin{cases}
\displaystyle{v''(t) = Av(t) + \int_0^tk(s)xds+ \int_0^t(t-s)k(s)yds,} \quad &0<t<T,\\
 v(0) = 0, \quad v'(0)=0,
\end{cases}
\end{equation}
\noindent  for $x,y\in X.$ The existence  of a unique solution of problem (\ref{(ACP_2)}) is closely connected with the existence of a unique $k$-convoluted mild solution of the wave equation  $u''(t) = Au(t)$ for $ 0<t < T,$ $u(0) = x$ and $u'(0)=y$ (see for example \cite[Theorem 2.1.3.9, Corollary 2.1.3.16]{ko}).

The main objective of this paper is to illustrate  the algebraic structure of this family of operators (above mentioned as local $k$-convoluted mild solution) and also known as local $k$-convoluted cosine functions.  One condition in the definition of local $k$-convoluted cosine functions followed in this paper (Definition \ref{cos-con} (ii)) may be interpreted as a Durhamel formula for the wave problem. Other equivalent definitions of local $K$-convoluted cosine functions, using the composition property of local $k$-convoluted cosine functions (\cite[Theorem 2.4]{K2}) or the Laplace transform (\cite[Definition 2.1]{K1})  show   this algebraic aspect in a straightforward way.

The usual convolution $\ast$ on $\R^+$, see (\ref{convo}), and the cosine convolution $\ast_c$ see (\ref{cosine}), are the starting technical tools to show some of our results. In Section 2, we prove some identities (in particular Lemma \ref{lemaclave}) for these convolution products which we apply in later sections.

The  extension formula in Theorem \ref{exten}, which improves some previous results, \cite[Theorem 3.1]{K2} and \cite[Theorem 2.1.1.30 (v)]{ko}, shows clearly how the  algebraic properties fits perfectly with the inner structure of $k$-convoluted cosine function. This theorem and the composition property of $k$-convoluted cosine function allow to define algebra homomorphism  from a particular set of test-functions in Theorem \ref{main2}. In the global case these algebra homomorphisms were considered in  \cite[Theorem 6.5]{KLM}.

The special case of $k(t)={t^{\alpha-1}\over \Gamma(\alpha)}$ with $\alpha >0$ defines the $\alpha$-times integrated cosine function. Originally they were the first example of $k$-convoluted cosine function. Note that in this case the algebra homomorphism (for $\ast_c$) are known as  distribution cosine function  and were introduced  in \cite{KTW} and studied in \cite{KMTW, M0}. Algebra homomorphisms from cosine convolution algebras and $\alpha$-times integrated cosine functions were treated deeply in \cite{M2}.

From other point of view, some particular connections between $k$-convoluted cosine function and vector valued homomorphisms (in particular ultradistributions and hyperfunction sines) have been studied in \cite[Section 5]{[KP2]} and \cite[Section 3.6.3]{ko}. As these authors comment, it is not clear how to obtain the corresponding results (similar to distribution cosine functions) in the case of ultradistribution and hyperfunction sines, see \cite[Remark 14]{[KP2]}. Our results in Section 4 show a path to define algebra homomorphisms using $k$-convoluted cosine functions.

Similar results as Theorem \ref{exten} and Theorem \ref{main2} may be proved for local $k$-convoluted $C$-cosine functions where $C$ is an injective operator (see \cite[Definition 1.1]{K2}); for $C=I_X$, this definition corresponds to local $k$-convoluted $C$-cosine families, see Remark \ref{c-op}.

In the last section, we apply our results to several particular examples of global and local $k$-convoluted cosine functions which have appeared in the context of this literature.

Similar results for  $k$-convoluted semigroups, distribution semigroups and algebra homomorphisms (for $\ast$)  hold and may be found in \cite{KMS}. Remind that the abstract Weierstrass formula gives (global) $\tilde k$-convoluted semigroups subordinated to (global) $k$-convoluted cosine functions, where
$$
\tilde k(t)=\int_0^\infty {s e^{-s^2\over 4t}\over 2\sqrt{\pi}t^{3/2}}k(s)ds, \qquad t\ge 0,
$$
see \cite[Theorem 11]{[KP2]}.

\section{Identities for convolutions products and k-Test function spaces}
\setcounter{theorem}{0}
\setcounter{equation}{0}

 Let $L^1_{loc}(\mathbb{R}^+)$ the set of locally integrable functions on $\mathbb{R}^+.$ For $f,g \in L^1_{loc}(\mathbb{R}^+),$ we consider the usual convolution given by
 \begin{equation}\label{convo}
 (f*g)(t) = \int_0^t f(t-s) g(s) ds\,, \ \ t \ge 0.
 \end{equation}
For $t \ge 0,$  we denote by $\chi$ the constant function equals to $1$, i.e., $\chi(t):=1,$ by  $I(t):=(\chi\ast\chi)(t)=t$. Consequently,
 $$
 (\chi*f)(t) = \int_0^t f(s) ds\,, \ \  (I*f)(t) = \int_0^t (t-s) f(s) ds\,, \ \ \ t \ge 0.
 $$

  We write $f^{*1} = f,\,$ $\,f^{*2} = f*f$ and   $f^{*n} = f*f^{*(n-1)}$ for $n \in \mathbb{N}$. We also follow the notation $\circ$ to denote the dual convolution product of $\ast$ given by
$$\displaystyle (f\circ g)(t)=\int_t^\infty f(s-t)g(s)ds, \ \ \  t\ge0,$$
where $f, \, g\in L^1(\R^+)$. Note that,
$$
\int_0^\infty f(t)\left(g\ast h\right)(t)dt=\int_0^\infty h(t)\left(g\circ f\right)(t)dt, \qquad f, g, h\in L^1(\R^+).
$$
There are interesting equalities between both convolution products, for example
$$
k \circ (f \circ g) = (k*f) \circ g = (f*k) \circ g = f \circ (k \circ g) , \qquad f, g , k\in L^1(\mathbb{R}^+).$$
 The cosine convolution product $(f*_cg)$ is defined by
\begin{equation} \label{cosine}f\ast_c g:= {1 \over 2}(f\ast g
+f\circ g+g\circ f), \qquad f, g\in L^1(\mathbb{R}^+).\end{equation}

For the show of the previous statement and more properties  of this convolution see \cite{KLM, M1, M2}. We denote by $\hat f$ the usual Laplace transform of a function  $f\in L^1_{loc}(\mathbb{R}^+)$ given by
$$
\hat f(\lambda)= \int_0^\infty e^{-\lambda t}f(t)dt,
$$
for $\lambda\in \C$ such that this integral converges.

We denote by  $\mathcal{D}$  the set of $\mathcal{C}^{(\infty)}$ functions with compact support on $\R$.We write by $\mathcal{D}_+$ the set of functions defined by  $\phi_+:[0, \infty) \to \C$, given by $\phi_+(t):=\phi(t)$ for $t\ge 0$ and $\phi\in \mathcal{D}$. Note that if  $\psi $ is a $\mathcal{C}^{(\infty)}$ functions on $[0,\infty)$ and  compact support  then $\psi \in \mathcal{D}_+$, see \cite{[Se]}. All these spaces are topological vector spaces equipped with the topology of uniform convergence on bounded subsets.

\begin{proposition} \label{propi} Let $\phi, \psi \in \mathcal{D}_+$ and $t \ge 0$. Then
\begin{itemize}
\item[(i)]$(\phi'\ast \psi)(t)= (\phi\ast \psi')(t) + \psi(0)\phi(t)-\phi(0)\psi(t)$.
\item[(ii)] $(\phi'\circ \psi)(t)=-\phi(0)\psi(t)- (\phi\circ\psi')(t)$.
\item[(iii)] $(\phi\circ \psi')(t)=-\phi(0)\psi(t)- (\phi'\circ\psi)(t)$.
\item[(iv)] $(\phi'\ast_c \psi)(t)={1\over 2} \, \left[\phi\ast \psi'-\phi\circ \psi'- \psi'\circ \phi\right](t)-\phi(0)\psi(t)$.
    \item[(v)]$(\phi''\ast_c\psi)(t)= (\phi\ast_c \psi'')(t)+\psi'(0)\phi(t)-\phi'(0)\psi(t)$.
\end{itemize}

\end{proposition}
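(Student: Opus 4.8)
The plan is to treat (i)--(iv) as first-order integration-by-parts identities and then to derive the second-order identity (v) by applying them twice. The essential structural point is that $\ast$ and $\circ$ behave differently under integration by parts: in $(\phi'\ast\psi)(t)=\int_0^t\phi'(t-s)\psi(s)\,ds$ the variable $s$ runs over a compact interval and both endpoints $s=0$ and $s=t$ contribute boundary terms, whereas in $(\phi'\circ\psi)(t)=\int_t^\infty\phi'(s-t)\psi(s)\,ds$ the upper limit is $+\infty$, where compact support annihilates the boundary contribution, leaving only the endpoint $s=t$.

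For (i) I would integrate by parts in $\int_0^t\phi'(t-s)\psi(s)\,ds$, using that $-\phi(t-s)$ is an antiderivative of $\phi'(t-s)$ in $s$; the boundary term is $[-\phi(t-s)\psi(s)]_0^t=\psi(0)\phi(t)-\phi(0)\psi(t)$ and the remaining integral is $(\phi\ast\psi')(t)$. For (ii) I would integrate by parts in $\int_t^\infty\phi'(s-t)\psi(s)\,ds$ with antiderivative $\phi(s-t)$; the term at $s=+\infty$ vanishes by compact support and the term at $s=t$ produces $-\phi(0)\psi(t)$, while the remaining integral is $-(\phi\circ\psi')(t)$. Identity (iii) is simply the algebraic rearrangement of (ii). For (iv) I would expand $(\phi'\ast_c\psi)=\tfrac12(\phi'\ast\psi+\phi'\circ\psi+\psi\circ\phi')$, substitute (i) into the first term, (ii) into the second, and the companion identity $\psi\circ\phi'=-\psi(0)\phi-\psi'\circ\phi$ (proved by the same integration by parts, now with the derivative on the unshifted factor) into the third; collecting boundary terms, the two copies of $\psi(0)\phi$ cancel while the two copies of $\phi(0)\psi$ combine, yielding the stated formula.

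The heart of the proposition is (v), which I would obtain from (iv) applied with $\phi$ replaced by $\phi'$, namely
\[
(\phi''\ast_c\psi)=\tfrac12\left[\phi'\ast\psi'-\phi'\circ\psi'-\psi'\circ\phi'\right]-\phi'(0)\psi,
\]
and then transferring the surviving derivative off the first slot of each of the three terms by a second use of the first-order identities: (i) with $\psi\mapsto\psi'$ gives $\phi'\ast\psi'=\phi\ast\psi''+\psi'(0)\phi-\phi(0)\psi'$, (ii) with $\psi\mapsto\psi'$ gives $\phi'\circ\psi'=-\phi(0)\psi'-\phi\circ\psi''$, and an analogous integration by parts gives $\psi'\circ\phi'=-\psi'(0)\phi-\psi''\circ\phi$. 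Inserting these three expressions and simplifying, the terms $\phi\ast\psi''$, $\phi\circ\psi''$ and $\psi''\circ\phi$ reassemble into $2(\phi\ast_c\psi'')$, the two $\phi(0)\psi'$ contributions cancel, the two $\psi'(0)\phi$ contributions add, and what remains is exactly $(\phi\ast_c\psi'')+\psi'(0)\phi-\phi'(0)\psi$.

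All the computations are elementary, so the only genuine obstacle is bookkeeping: one must track the boundary values at $s=0$, $s=t$ and $s=+\infty$ through several integrations by parts and verify that the spurious terms cancel in precisely the right pattern. The compact support of $\phi,\psi\in\mathcal{D}_+$ is what guarantees the vanishing of the boundary terms at infinity, and hence the legitimacy of every integration by parts carried out on the $\circ$ product.
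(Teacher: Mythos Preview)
Your proposal is correct and follows essentially the same route as the paper: (i)--(iii) via a single integration by parts (the paper in fact cites [Wa] for (i) rather than proving it, but your direct computation is the same argument), (iv) by expanding the definition of $\ast_c$ and inserting (i)--(iii) together with the swapped version of (iii), and (v) by first applying (iv) with $\phi$ replaced by $\phi'$ and then transferring the remaining derivative via (i)--(iii). Your bookkeeping of the boundary terms in (iv) and (v) is accurate and matches the paper's computation line for line.
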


\begin{proof} The part (i) appears in \cite[Proposition 3.1 (iii)]{[Wa]}. Integrating by parts we obtain
$$
(\phi'\circ \psi)(t)=\int_t^\infty \phi'(s-t)\psi(s)ds=-\phi(0)\psi(t)- (\phi\circ\psi')(t), \qquad t\ge 0,
$$
\noindent hence (ii) and (iii) are shown. The item (iv) is straightforward  from the definition of $\ast_c$ and parts (i), (ii) and (iii). Now to show (v), we apply parts (i)-(iv) to get that
\begin{eqnarray*}
(\phi''\ast_c\psi)(t)&=&{1\over 2} \, \left[\phi'\ast \psi'-\phi'\circ \psi'- \psi'\circ \phi'\right](t) - \phi'(0)\psi(t)\\
&=&{1\over 2} \, \left[\phi\ast \psi''+\phi\circ \psi''+ \psi''\circ \phi'\right](t) + \psi'(0)\phi(t)-\phi'(0)\psi(t).
\end{eqnarray*}
\end{proof}

\begin{lemma}{\label{lemaclave}}
Let $0 \le t \le s$ and $f, g \in L^1_{loc}(\mathbb{R}^+)$. Then

$$
\begin{array}{l}
\mbox{(a)} \  \  \displaystyle (\chi*g)(t) (\chi*f)(s) =  \int_{s}^{t+s} g(t+s-r) (\chi*f)(r) dr -  \int_0^{t} f(t + s - r) (\chi*g)(r) dr \\
\\
\mbox{(b)} \  \  \displaystyle (\chi*g)(t) (\chi*f)(s)  = \displaystyle \int_{s - t}^{s} g(t + r  - s) \, (\chi * f)(r)  \, dr  + \displaystyle \int_0^{t}f(r + s - t) \, (\chi * g)(r)  \, dr. \\
\end{array}
$$

\end{lemma}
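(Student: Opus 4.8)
The plan is to reduce both identities to a single change-of-order-of-integration computation. Writing out the definitions, the left-hand side of both (a) and (b) is the product
$$
(\chi\ast g)(t)\,(\chi\ast f)(s)=\Big(\int_0^t g(u)\,du\Big)\Big(\int_0^s f(v)\,dv\Big)=\int_0^t\int_0^s g(u)f(v)\,dv\,du,
$$
that is, the integral of $g(u)f(v)$ over the rectangle $[0,t]\times[0,s]$. The strategy is to show that each right-hand side, after a linear substitution in the first integral and Fubini together with a substitution in the second, collapses to exactly this same double integral.

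First I would treat (a). In the first term $\int_s^{t+s} g(t+s-r)(\chi\ast f)(r)\,dr$ I substitute $u=t+s-r$ to obtain $\int_0^t g(u)\,(\chi\ast f)(t+s-u)\,du=\int_0^t g(u)\int_0^{t+s-u}f(v)\,dv\,du$. In the second term $\int_0^t f(t+s-r)(\chi\ast g)(r)\,dr$ I expand $(\chi\ast g)(r)=\int_0^r g(u)\,du$, interchange the order of integration over the triangle $0\le u\le r\le t$, and then substitute $v=t+s-r$ in the inner integral; this yields $\int_0^t g(u)\int_s^{t+s-u}f(v)\,dv\,du$. Subtracting the two and using $\int_0^{t+s-u}f-\int_s^{t+s-u}f=\int_0^s f$ recovers the double integral above.

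For (b) the same two manoeuvres apply with the obvious changes of variable ($u=t+r-s$ in the first term, $v=r+s-t$ in the inner integral of the second), producing $\int_0^t g(u)\int_0^{u+s-t}f(v)\,dv\,du$ and $\int_0^t g(u)\int_{u+s-t}^{s}f(v)\,dv\,du$ respectively; here the two inner $f$-integrals are complementary and \emph{add} up to $\int_0^s f$, again recovering the left-hand side.

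The only point requiring care, and the main obstacle, is the legitimacy of the splitting $\int_0^s f=\int_0^{t+s-u}f\pm\int_{\cdots}f$: this needs all the inner limits to lie in $[0,s]$. Since $0\le u\le t$ and $0\le t\le s$, one checks $s-t\le u+s-t\le s$ and $s\le t+s-u\le t+s$, so the relevant endpoints stay in the range where additivity of the integral is available; this is precisely where the hypothesis $0\le t\le s$ enters. The sign bookkeeping (subtraction in (a), addition in (b)) is the other thing to track but is otherwise routine. As an independent check one could instead fix $s$, observe that both sides vanish at $t=0$, and differentiate in $t$, moving the $t$-dependence inside $\chi\ast f$ and $\chi\ast g$ (which are absolutely continuous, so no derivative of $f$ or $g$ is needed); the two integral terms then cancel and both derivatives equal $g(t)(\chi\ast f)(s)$.
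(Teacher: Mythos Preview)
Your argument is correct. You reduce both identities to the observation that each side equals the rectangle integral $\int_0^t\int_0^s g(u)f(v)\,dv\,du$: a linear substitution in the first term and Fubini followed by a substitution in the second term produce two inner $f$-integrals that either telescope (part (a)) or are complementary (part (b)).

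The paper takes a different route. For (a) it integrates $\int_0^t f(t+s-r)(\chi\ast g)(r)\,dr$ by parts, differentiating $\chi\ast g$ and choosing the antiderivative of $r\mapsto f(t+s-r)$ that vanishes at $r=t+s$, then changes variables in the boundary and integral terms. For (b) it does not repeat this scheme; instead it rewrites the same integral via Fubini and substitutions until the right-hand side of (b) emerges, and then invokes (a) to finish. Thus the paper makes explicit that (b) is a consequence of (a), while your treatment is more symmetric: the same manoeuvre handles both parts in parallel, revealing them as the two natural slicings of the rectangle $[0,t]\times[0,s]$ along the diagonals $v=t+s-u$ and $v=u+s-t$. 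Both approaches need the hypothesis $0\le t\le s$ only to keep the intermediate limits ordered correctly, and both cost about the same amount of work.
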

\begin{proof} First, we show $(a)$. Integrating by parts and using change of variable we have

\vspace{0.3cm}
$\displaystyle \int_0^{t} f(t + s - r) (\chi*g)(r) dr$
$$
\begin{array}{rl}
&  =   - \displaystyle \int_{t}^{t+s} f(t + s -r) dr \  \int_0^{t} g(u)du + \int_0^{t} g(r)  \int_r^{t+s} f(t + s-u) du \, dr \\
& \\
& =  - \displaystyle \int_0^{s} f(r) dr \  \int_0^{t} g(u)du + \int_{s}^{t+s} g(t + s -r)  \int_{t + s -r}^{t+s} f(t + s-u) du \, dr \\
& \\
& =  - \displaystyle \int_0^{s} f(r) dr \  \int_0^{t} g(u)du + \int_{s}^{t+s} g(t + s - r)  \int_{0}^r f(u) du \, dr \\
& \\
& =  - \displaystyle (\chi*f)(s) \  (\chi*g)(t) + \int_{s}^{t+s} g(t + s - r) (\chi*f)(r) \, dr.
\end{array}
$$

Now, we prove $(b)$, using the Fubini theorem and change the variable we obtain
$$
\begin{array}{l}
\displaystyle \int_0^{t} f(t+s-r) (\chi*g)(r) dr  =  \displaystyle \int_0^{t} g(u) \int_u^{t} f(t + s - r) dr \, du  \\
 \\
=  \displaystyle \int_{s - t}^{s} g(u+ t - s) \int_{u+ t- s}^{t} f(t + s - r) dr \, du = \displaystyle \int_{s - t}^{s} g(u + t - s) \int_{s}^{2s-u} f(r) dr \, du\\
 \\
=  \displaystyle \int_{s - t}^{s} g(u + t-s) \int_{0}^{2s-u} f(r) dr \, du
- \displaystyle \int_{s - t}^{s} g(u+t-s) \int_{0}^{u} f(r) dr \, du\\
\\
\ \ \ - \displaystyle \int_{s - t}^{s} g(u+t-s) \int_{u}^{s} f(r) dr \, du\\
\\
= \displaystyle \int_{s}^{t + s} g(t + s - u) (\chi*f)(u)du -  \int_{s - t}^{s} g(u + t  - s) (\chi* f)(u)\, du \\
\\
\ \ \ - \displaystyle \int_{0}^{t} f(r + s - t) (\chi*g)(r) \, dr.
\end{array}
$$
Hence,
$$
\begin{array}{l}
\displaystyle \int_{s}^{t+s} g(t + s - r) (\chi*f)(r)dr - \displaystyle \int_0^{t} f(t + s -r) (\chi*g)(r) dr =  \\
\\
  \displaystyle \int_{s - t}^{s} g(r + t  - s) (\chi* f)(r)\, dr
+ \displaystyle \int_{0}^{t} f(r + s - t) (\chi*g)(r) \, du,
\end{array}
$$
\noindent from $(a)$ follows $(b).$

\end{proof}

\begin{remark} {\rm If in Lemma \ref{lemaclave} we taking $f = g$ in $(a)$  then
$$
 \displaystyle (\chi*f)(t) (\chi*f)(s)  =  \int_{s}^{t+s} f(t+s-r) (\chi*f)(r) dr -  \int_0^{t} f(t + s - r) (\chi*f)(r) dr\,.
$$

If $s = t$ in $(b)$  then
$$
\displaystyle (\chi*g)(t) (\chi*f)(t)  = \displaystyle \int_{0}^{t} g(r) \, (\chi * f)(r)  \, dr  + \displaystyle \int_0^{t}f(r) \, (\chi * g)(r)  \, dr.
$$
We conclude that
$$
\displaystyle [(\chi*f)(t)]^2  = \int_{t}^{2t} f(2t-r) (\chi*f)(r) dr -  \int_0^{t} f(2t - r) (\chi*f)(r) dr = 2 \displaystyle \int_{0}^{t} f(r) \, (\chi * f)(r) dr.
$$
}
\end{remark}

We write by $\hbox{supp}(h)$ the usual support of a function $h$ defined in $\R$.
  The operator $T'_{k}: \mathcal{D}_+ \rightarrow \mathcal{D}_+$ is given by $f\mapsto T'_{k}(f):=k\circ f$. In the case that $0\in \hbox{supp}(k)$, from  \cite[Theorem 2.5]{KLM},  we have that $T'_{k}$ is an injective, linear and continuous homomorphism such that
$$T'_{k}(f\circ g) = f \circ T'_{k}(g),\qquad f,g\in\mathcal{D}_{+}.$$

According to \cite[Definition 2.7]{KLM}, the space $\mathcal{D}_{k}$ is given by $\mathcal{D}_{k}:=T'_{k}(\mathcal{D}_{+})\subset \mathcal{D}_{+} $ and the right inverse map $T'_{k}$ is  $W_{k}:\mathcal{D}_{k} \rightarrow \mathcal{D}_{+}$ defined by
$$f(t)= T'_{k}(W_{k}(f))(t)=\int_{t}^{\infty}k(s-t)W_{k}f(s)ds,\qquad f\in\mathcal{D}_{k}, \quad t\geq0.$$
 It is clear that the subspace $\mathcal{D}_{k}$ is also a topological vector space.

 The following result are show in \cite[Section 3]{KMS}.

 \begin{proposition} \label{equivalencia} Take $k\in L^{1}_{loc}(\mathbb{R}^{+})$ such that  $0\in \hbox{supp}(k)$.  Then
\medskip
  \begin{itemize}
  \item[(i)]  For $a>0,$   $supp(f)\subset [0,a]$ if and only if $supp(W_kf)\subset [0,a]$ for $f\in\mathcal{D}_{k}$.

  \medskip

  \item[(ii)]   $ \mathcal{D}_{k^{\ast n}}\subset \mathcal{D}_{k^{\ast m}}$  and  $W_{k^{\ast m}}f= k^{n-m}\circ W_{k^{\ast n}}f=  W_{k^{\ast n}}(k^{n-m}\circ f)$  for $f\in \mathcal{D}_{k^{\ast n}}$, and $n\ge m \ge 1$.

  \item[(iii)] The space $\displaystyle \mathcal{D}_{k^{\ast \infty}}:=\bigcap_{n=1}^\infty \mathcal{D}_{k^{\ast n}}$ is a topological vector space, $ W_{k^{\ast n}}\in {\mathcal L}(\mathcal{D}_{k^{\ast \infty}})$  and  $k^{\ast n}\circ W_{k^{\ast n}}f=f $
  for $f\in \mathcal{D}_{k^{\ast \infty}}$ and $n\in \N$.
  \end{itemize}

\end{proposition}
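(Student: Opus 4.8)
The plan is to reduce everything to the algebraic identities relating $\ast$ and $\circ$ recorded above, the injectivity of the maps $T'_{k^{\ast n}}$, and a single support identity of Titchmarsh type. The first observation I would make is that $0\in\hbox{supp}(k)$ forces $0\in\hbox{supp}(k^{\ast n})$ for every $n$: by the Titchmarsh convolution theorem $\inf\hbox{supp}(k^{\ast n})=n\inf\hbox{supp}(k)=0$, so each $T'_{k^{\ast n}}$ is a well-defined injective continuous homomorphism of $\mathcal{D}_+$ and each $W_{k^{\ast n}}$ is its right inverse on $\mathcal{D}_{k^{\ast n}}$. This is what makes all the uniqueness arguments below legitimate.

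For (i) I would prove the sharper statement that $\sup\hbox{supp}(k\circ g)=\sup\hbox{supp}(g)$ for every $g\in\mathcal{D}_+$; since $f=T'_k(W_kf)=k\circ W_kf$ and both $f$ and $W_kf$ are supported in $[0,\infty)$, the claimed equivalence is then immediate. Write $\beta:=\sup\hbox{supp}(g)<\infty$. The inclusion $\sup\hbox{supp}(k\circ g)\le\beta$ is clear from $(k\circ g)(t)=\int_0^\infty k(u)g(t+u)\,du$, which vanishes for $t>\beta$. For the reverse inequality I reflect about $\beta$: setting $\gamma(v):=g(\beta-v)$, the change of variables $v=\tau-u$ gives
\[
(k\circ g)(\beta-\tau)=\int_0^\tau k(\tau-v)\gamma(v)\,dv=(k\ast\gamma)(\tau).
\]
Since $\inf\hbox{supp}(\gamma)=\beta-\sup\hbox{supp}(g)=0$ and $\inf\hbox{supp}(k)=0$, the Titchmarsh convolution theorem yields $\inf\hbox{supp}(k\ast\gamma)=0$, that is, $k\circ g$ does not vanish identically on any left neighbourhood of $\beta$. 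Hence $\sup\hbox{supp}(k\circ g)=\beta$, as desired.

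For (ii) the inclusion and both formulas are consequences of the associativity identity $(a\ast b)\circ c=a\circ(b\circ c)$ together with $k^{\ast n}=k^{\ast m}\ast k^{\ast(n-m)}$ and injectivity. Indeed, for $\phi\in\mathcal{D}_+$ one has $k^{\ast n}\circ\phi=k^{\ast m}\circ(k^{\ast(n-m)}\circ\phi)$ with $k^{\ast(n-m)}\circ\phi\in\mathcal{D}_+$, which gives $\mathcal{D}_{k^{\ast n}}\subset\mathcal{D}_{k^{\ast m}}$. Given $f\in\mathcal{D}_{k^{\ast n}}$, applying the same identity to $W_{k^{\ast n}}f$ shows $k^{\ast m}\circ\bigl(k^{\ast(n-m)}\circ W_{k^{\ast n}}f\bigr)=k^{\ast n}\circ W_{k^{\ast n}}f=f$; since $k^{\ast(n-m)}\circ W_{k^{\ast n}}f\in\mathcal{D}_+$, uniqueness of the right inverse forces $W_{k^{\ast m}}f=k^{\ast(n-m)}\circ W_{k^{\ast n}}f$. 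The second formula then follows by applying $W_{k^{\ast n}}$ to the identity $k^{\ast(n-m)}\circ f=k^{\ast n}\circ\bigl(k^{\ast(n-m)}\circ W_{k^{\ast n}}f\bigr)$ and invoking injectivity once more.

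Finally, for (iii) the family $\{\mathcal{D}_{k^{\ast n}}\}_n$ is decreasing by (ii), so $\mathcal{D}_{k^{\ast\infty}}$ carries the projective limit (subspace) topology and is a topological vector space. The key point is that $W_{k^{\ast n}}$ preserves this intersection: if $f\in\mathcal{D}_{k^{\ast\infty}}$, then for every $j\ge 0$ we may write $f=k^{\ast(n+j)}\circ W_{k^{\ast(n+j)}}f=k^{\ast n}\circ\bigl(k^{\ast j}\circ W_{k^{\ast(n+j)}}f\bigr)$, whence injectivity gives $W_{k^{\ast n}}f=k^{\ast j}\circ W_{k^{\ast(n+j)}}f\in\mathcal{D}_{k^{\ast j}}$ for all $j$, i.e. $W_{k^{\ast n}}f\in\mathcal{D}_{k^{\ast\infty}}$. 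Continuity of $W_{k^{\ast n}}$ on $\mathcal{D}_{k^{\ast\infty}}$ is inherited from its continuity on each $\mathcal{D}_{k^{\ast n}}$ through the projective topology, and $k^{\ast n}\circ W_{k^{\ast n}}f=f$ is just the defining right-inverse property restricted to $\mathcal{D}_{k^{\ast\infty}}\subset\mathcal{D}_{k^{\ast n}}$. The main obstacle throughout is the support identity in (i): once the Titchmarsh-type equality $\sup\hbox{supp}(k\circ g)=\sup\hbox{supp}(g)$ and the injectivity of $T'_{k^{\ast n}}$ are in hand, parts (ii) and (iii) are purely formal manipulations of the two convolution products.
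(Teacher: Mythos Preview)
The paper does not actually prove this proposition: it simply states ``The following result are show in \cite[Section 3]{KMS}'' and cites the preprint of Keyantuo, Miana and S\'anchez--Lajusticia. So there is no in-paper argument to compare against, and your proposal in fact supplies what the paper omits.

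Your argument is sound and is presumably close to what \cite{KMS} does. The reduction of (i) to the Titchmarsh identity $\sup\hbox{supp}(k\circ g)=\sup\hbox{supp}(g)$ via the reflection $\gamma(v)=g(\beta-v)$ and the computation $(k\circ g)(\beta-\tau)=(k\ast\gamma)(\tau)$ is clean and correct; the derivation of (ii) from the associativity $(a\ast b)\circ c=a\circ(b\circ c)$ together with injectivity of $T'_{k^{\ast m}}$ is exactly the right mechanism; and the invariance of $\mathcal{D}_{k^{\ast\infty}}$ under $W_{k^{\ast n}}$ in (iii) follows as you say from the formula in (ii) with indices $(n+j,n)$.

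One point deserves a word more than you give it: the continuity assertion $W_{k^{\ast n}}\in\mathcal{L}(\mathcal{D}_{k^{\ast\infty}})$. You write that continuity ``is inherited from its continuity on each $\mathcal{D}_{k^{\ast n}}$ through the projective topology,'' but you have not argued that $W_{k^{\ast n}}$ is continuous on $\mathcal{D}_{k^{\ast n}}$ in the first place; this is the inverse of the continuous injection $T'_{k^{\ast n}}$ and its continuity is not automatic from what you have written. In the LF-space framework of $\mathcal{D}_+$ one can invoke a closed-graph argument (the graph of $W_{k^{\ast n}}$ is the graph of $T'_{k^{\ast n}}$ with coordinates swapped, hence closed), or appeal directly to the treatment in \cite{KLM}. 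Once that is said, everything else in your proof is, as you note, a formal manipulation of the two convolution products.
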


\medskip
\begin{example}\label{function} {\rm (i) Let $\alpha >0$ and $ j_\alpha(t):={t^{\alpha-1}\over \Gamma(\alpha)}$.  The map $W_{j_\alpha}$ is the Weyl fractional derivative of order $\alpha$ (usually written by $W^\alpha$), and ${\mathcal D}_{j_\alpha}={\mathcal D}_+$; in the case $\alpha \in \N$, we have  $W^\alpha = (-1)^\alpha{d^\alpha \over dt^\alpha}$ (see \cite{[GM], SKM}).

\medskip

 (ii) Let $\chi_{(0,1)}$ the characteristic function on the interval $(0,1)$. Then the operator $T'_{\chi_{(0,1)}}$ is given by
 $$
 T'_{\chi_{(0,1)}}(f)(t)=\int_t^{t+1}f(s)ds, \qquad f\in {\mathcal D}_+, \quad t\geq0,$$

  ${\mathcal D}_{\chi_{(0,1)}}={\mathcal D}_+$ and
$$
W_{\chi_{(0,1)}}f(t)=-\sum_{n=0}^{\infty} f'(t+n), \qquad f \in {\mathcal D}_+,\quad t\geq0,
$$
see \cite[Section 2]{KLM}.

\medskip

(iii) Let $0<\delta <1$ and $r>0$. The functions $K_\delta$ given by
$$
K_\delta(t):={1\over 2\pi i}\int_{r-i\infty}^{r+i \infty}e^{\lambda t-\lambda^\delta}d\lambda, \qquad t\ge 0,
$$
verify that $\widehat{K_\delta}(\lambda)=e^{-\lambda^\delta} $ for $\lambda \in \C^+$. It is known that $0\in\hbox{supp}(K_\delta)$ (see \cite[p. 107]{[ABHN]}) and we may define $W_{K_\delta}$ and $\mathcal{D}_{K_\delta}$. This function was considered in \cite[Section 5]{[Ba]}, \cite[Example 6.1]{[KP]} and  \cite[Example 6.1]{[KP2]}).

\medskip

(iv) Let
$$
{\mathcal K}(\lambda):={1\over \lambda^2}\prod_{n=0}^\infty{n^2-\lambda\over n^2+\lambda}, \qquad \Re \lambda >0.
$$
Then there exists  a continuous and exponential bounded function ${\kappa}$ in $[0,\infty)$ with $\widehat \kappa= {\mathcal K}$, $0\in\textrm{supp}(\kappa)$ and  ${\mathcal D}_{\kappa} \varsubsetneq  {\mathcal D}_+$, see \cite[Section 3]{KMS}.
The following example was presented in \cite[Section 5]{[Ba]} and appeared later in other references in connection to convoluted semigroups (see \cite[Example 6.1]{[KP]} and  \cite[Example 6.1]{[KP2]}).
}
\end{example}

\section{Local convoluted cosine functions}
\setcounter{theorem}{0}
\setcounter{equation}{0}

The notion of $k-$convoluted cosine functions appears implicitly introduced in \cite{CaL}, as a generalization of the concept of $n-$times integrated cosine functions given in \cite[Section 6]{AK}. We will consider the following definition of local $k-$convoluted cosine as appear in \cite{K1, [KP2]}. As we commented in the Introduction the condition (ii) in the next definition may be considered as a Duhamel's formula for the abstract wave equation.

\begin{definition}{\label {cos-con}}
Let $A$ a closed operator, $k \in L_{loc}^1([0, \tau))$ and $0 < \tau \le \infty$. A strongly continuous operator family $(C_k(t))_{t \in [0,\tau)}$ is said  a local $k-$convoluted cosine function generated by $A$ if
\vspace{0.2cm}

\begin{itemize}
\item[(i)] $C_k(t) A \subset AC_k(t)\,, \ \ \ t \in [0,\tau)$,
\item[(ii)]  for all $x \in X$ and $t \in [0,\tau)\,:$ \ \  $\displaystyle \int_0^t (t-s) C_k(s)x ds \ \in \ D(A)$ and
$$
A \int_0^t(t-s) C_k(s)x ds = C_k(t)x - (\chi * k)(t)x.
$$

\end{itemize}
\end{definition}

\noindent If $A$ generates  a local $k-$convoluted cosine function $(C_k(t))_{t \in [0,\tau)}$ then $C_k(0) = 0.$  Taking $\tau = \infty$ we have that  $(C_k(t))_{t \in [0,\infty)}$ is an exponentially bounded, $k-$convoluted cosine function with generator $A,$ for view its properties, see \cite{KLM, K1, [KP2]}, among others. For exponentially bounded functions  the $k-$convoluted cosine functions are given  in terms of the Laplace transform (see \cite[Definition 2.1]{K1}).

 Plugging $\,k =  j_\alpha\,$ $(\alpha > 0)$  in above definition, we obtain the well-known classes of $\alpha-$times  integrated cosine function. A characterization  of  exponentially bounded $\alpha-$times integrated cosine function in terms of its Laplace transform was obtained in \cite{CCK}. The relationship between $\alpha-$times integrated
cosine function and the operator Bessel function are studied in \cite{G}.
\vspace{0.2cm}

The following proposition is a direct consequence of \cite[Lemma 4.4]{K1}.

\begin{lemma}{\label{propK}}
Let $A$  be a generator of a local $k-$convoluted cosine function $(C_k(t))_{t \in [0,\tau)},$ and let  $h \in L^1_{loc}([0,\tau))$ such that
$\, k*h \ne 0$ in $L^1_{loc}([0,\tau))$. Then $A$ is a generator of an $(h*k)-$convoluted cosine function $((h*C_k)(t))_{t \in [0,\tau)}.$
\end{lemma}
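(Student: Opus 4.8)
The plan is to verify directly that the family $\bigl((h\ast C_k)(t)\bigr)_{t\in[0,\tau)}$, defined by $(h\ast C_k)(t)x=\int_0^t h(t-s)C_k(s)x\,ds$, satisfies the two conditions of Definition \ref{cos-con} with the convolution kernel $h\ast k$ in place of $k$. Strong continuity of $t\mapsto(h\ast C_k)(t)x$ is standard, being the convolution of the $L^1_{loc}$ function $h$ with the strongly continuous, hence locally bounded, map $s\mapsto C_k(s)x$. The hypothesis $k\ast h\neq0$ in $L^1_{loc}([0,\tau))$ is what guarantees that $h\ast k$ is a genuine (nonzero) kernel, so that $A$ is the generator of an honest $(h\ast k)$-convoluted cosine function rather than of a degenerate family.

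First I would check condition (i). For $x\in D(A)$, the inclusion $C_k(s)A\subset AC_k(s)$ gives $h(t-s)C_k(s)Ax=h(t-s)AC_k(s)x$ pointwise in $s$; since $A$ is closed and both $s\mapsto C_k(s)x$ and $s\mapsto C_k(s)Ax=AC_k(s)x$ are continuous, the closed-operator interchange theorem allows me to pull $A$ out of the integral, yielding $(h\ast C_k)(t)Ax=A(h\ast C_k)(t)x$, i.e.\ $(h\ast C_k)(t)A\subset A(h\ast C_k)(t)$.

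The heart of the argument is condition (ii). With $I(t)=t$ as introduced after (\ref{convo}), the required object is $\bigl(I\ast(h\ast C_k)\bigr)(t)x=\int_0^t(t-s)(h\ast C_k)(s)x\,ds$. Using commutativity and associativity of the usual convolution (a Fubini computation valid since $I,h\in L^1_{loc}$ and $C_k(\cdot)x$ is continuous) I rewrite this as $\bigl(h\ast(I\ast C_k)\bigr)(t)x=\int_0^t h(t-s)\,(I\ast C_k)(s)x\,ds$. By condition (ii) applied to $C_k$, each $(I\ast C_k)(s)x=\int_0^s(s-r)C_k(r)x\,dr$ lies in $D(A)$ with $A(I\ast C_k)(s)x=C_k(s)x-(\chi\ast k)(s)x$, and this is continuous in $s$. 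Invoking the closedness of $A$ once more to move $A$ inside the $h$-convolution gives
\begin{align*}
A\int_0^t(t-s)(h\ast C_k)(s)x\,ds
&=\int_0^t h(t-s)\bigl[C_k(s)x-(\chi\ast k)(s)x\bigr]\,ds\\
&=(h\ast C_k)(t)x-\bigl(h\ast(\chi\ast k)\bigr)(t)x.
\end{align*}
Finally, $h\ast(\chi\ast k)=\chi\ast(h\ast k)$ by associativity and commutativity of $\ast$, so the last term becomes $(\chi\ast(h\ast k))(t)x$, which is exactly the identity demanded in Definition \ref{cos-con}(ii) for the kernel $h\ast k$.

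I expect the only delicate points to be the two applications of the closed-operator interchange theorem — which require checking that the integrands are $D(A)$-valued with continuous (hence locally $h$-integrable) images under $A$ — together with the associativity identities for $\ast$ in the operator-valued setting; these are routine but should be stated carefully. A shorter alternative is simply to quote \cite[Lemma 4.4]{K1}, as the authors indicate, but the direct verification above exhibits the underlying algebraic mechanism transparently.
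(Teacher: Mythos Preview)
Your verification is correct. The paper itself gives no argument here: it merely records that the lemma is ``a direct consequence of \cite[Lemma 4.4]{K1}'' and moves on. What you have written is precisely the self-contained proof that the citation is meant to cover --- check strong continuity, use closedness of $A$ and Hille's theorem to push $A$ through the $h$-convolution in both conditions of Definition~\ref{cos-con}, and finish with the associativity $h\ast(\chi\ast k)=\chi\ast(h\ast k)$. You already flag this at the end of your write-up, so you are aware there is no genuine methodological difference; you have simply unpacked the reference.
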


\vspace{0.2cm}
The next theorem is the main result in this paper. Note that we may extend the support of the solution from $[0, \tau)$ to $[0, n\tau)$ for any $n\in \mathbb{N}$. This technique, based in results given in the previous section, improves the result of Kosti\'c, see \cite[Theorem 3.1]{K2} and \cite[Theorem 2.1.1.14]{ko} in the case $C=I_X$.

\begin{theorem}\label{exten} Let $n \in \mathbb{N},\, 0 < \tau \le \infty, \, k \in L^1([0, (n+1)\tau))$  and $(C_k(t))_{t \in [0,\tau)}$ is  a local $k-$convoluted cosine function generated by $A.$ The family of operators $(C_{k^{*(n+1)}}(t))_{t \in [0,(n+1)\nu]}$ defined for $t \in [0, n\nu]$ by
\begin{equation}{\label{Ck0nu}}
C_{k^{*(n+1)}}(t)x = \int_0^t k(t-r) C_{k^{*n}}(r)x \, dr\,,
\end{equation}
\noindent and for  $t \in [n\nu, (n+1)\nu]$ as
\begin{equation}{\label{CkNu}}
  \begin{array}{rl}
C_{k^{*(n+1)}}(t)x = & \displaystyle  2 C_{k^{*n}}(n\nu) C_{k}(t - n\nu)x  +  \int_0^{n\nu} k(t-r) C_{k^{*n}}(r) x \, dr  \\
& \\
  & + \ \displaystyle  \int_0^{t - n\nu} k^{*n}(t-r) C_{k}(r) x \, dr  -  \int_{2n\nu - t}^{n\nu} k(r + t -2 n\nu ) C_{k^{*n}}(r) x \, dr  \\
& \\
 & - \  \displaystyle \int_0^{t-n\nu} k^{*n}(r-t+2 n \nu) C_{k}(r) x \, dr,\\
\end{array}
\end{equation}

\noindent  is a local $k^{*(n+1)}-$convoluted cosine function with generator $A$ for any $\nu < \tau$ and  $x \in X$.

\end{theorem}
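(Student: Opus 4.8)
The plan is to argue by induction on $n$, reading Theorem \ref{exten} as a family of statements indexed by $n$ and using the statement for $n-1$ as the inductive hypothesis. Concretely, the base case is the given data: $C_{k^{*1}}=C_k$ is a local $k$-convoluted cosine function on $[0,\nu]$. In the inductive step I assume that $(C_{k^{*n}}(t))_{t\in[0,n\nu]}$ is already a local $k^{*n}$-convoluted cosine function generated by $A$ on the enlarged interval $[0,n\nu]$, and I construct $(C_{k^{*(n+1)}}(t))_{t\in[0,(n+1)\nu]}$ by \eqref{Ck0nu}--\eqref{CkNu}. With the hypothesis in force every ingredient is well defined: on $[0,n\nu]$ one only needs $C_{k^{*n}}$ on $[0,n\nu]$, and on $[n\nu,(n+1)\nu]$ one needs the value $C_{k^{*n}}(n\nu)$, the family $C_{k^{*n}}$ on $[0,n\nu]$, and $C_k$ on $[0,\nu]$, while the integrability assumption $k\in L^1([0,(n+1)\tau))$ guarantees that the kernels $k^{*n}$ and $k^{*(n+1)}$ (and $(\chi*k^{*(n+1)})$) make sense on the relevant ranges. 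First I would verify that the two prescriptions agree at $t=n\nu$: in \eqref{CkNu} the first term vanishes because $C_k(0)=0$, the third and fifth integrals are empty, and the fourth is $-\int_{n\nu}^{n\nu}$, so only $\int_0^{n\nu}k(n\nu-r)C_{k^{*n}}(r)x\,dr$ survives, which is exactly \eqref{Ck0nu} at $t=n\nu$. Strong continuity on each subinterval is then immediate from the continuity of $C_{k^{*n}}$, $C_k$ and the local integrability of the kernels.

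Property (i) of Definition \ref{cos-con} I would settle directly: each operator entering \eqref{Ck0nu} and \eqref{CkNu} intertwines with $A$. By hypothesis $C_{k^{*n}}(s)A\subset AC_{k^{*n}}(s)$ and $C_k(s)A\subset AC_k(s)$; these pass through the scalar integrals since $A$ is closed, and through the product $2C_{k^{*n}}(n\nu)C_k(t-n\nu)$ because for $x\in D(A)$ one has $C_k(t-n\nu)x\in D(A)$ and hence $C_{k^{*n}}(n\nu)C_k(t-n\nu)Ax=C_{k^{*n}}(n\nu)AC_k(t-n\nu)x=AC_{k^{*n}}(n\nu)C_k(t-n\nu)x$. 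Thus $C_{k^{*(n+1)}}(t)A\subset AC_{k^{*(n+1)}}(t)$ for all $t\in[0,(n+1)\nu]$.

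The heart of the proof is the Duhamel identity (ii),
$$A\int_0^t(t-s)\,C_{k^{*(n+1)}}(s)x\,ds=C_{k^{*(n+1)}}(t)x-(\chi*k^{*(n+1)})(t)x.$$
For $t\in[0,n\nu]$ this is free: there $C_{k^{*(n+1)}}=k*C_{k^{*n}}$ by \eqref{Ck0nu}, and applying Lemma \ref{propK} to the local $k^{*n}$-convoluted cosine function $C_{k^{*n}}$ on $[0,n\nu]$ with $h=k$ (so that $k*k^{*n}=k^{*(n+1)}\ne0$) shows that $k*C_{k^{*n}}$ is a $k^{*(n+1)}$-convoluted cosine function there. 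For $t\in[n\nu,(n+1)\nu]$ I would substitute the explicit formulas \eqref{Ck0nu}--\eqref{CkNu} for $C_{k^{*(n+1)}}(s)$ into the weighted integral, interchange the orders of integration by Fubini, and rearrange so that the inner variable carries the correct linear weight; this is precisely the manipulation that the integration-by-parts relations of Proposition \ref{propi} and the product identities of Lemma \ref{lemaclave} are designed to perform. Each resulting double-weight integral $\int_0^u(u-\sigma)C_{k^{*n}}(\sigma)x\,d\sigma$ or $\int_0^u(u-\sigma)C_k(\sigma)x\,d\sigma$ is then hit by $A$, which is moved inside using (i) and the closedness of $A$, and replaced by its Duhamel right-hand side coming from the induction hypothesis (for $C_{k^{*n}}$) and from the base family (for $C_k$); the product term is handled by pulling the constant operator $C_{k^{*n}}(n\nu)$ outside and invoking the relation for $C_k$. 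What remains is a sum of operator-valued terms together with a sum of purely scalar terms multiplying $x$.

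The main obstacle, and where Lemma \ref{lemaclave} enters decisively, is the bookkeeping that turns this raw sum into the right-hand side of (ii). The operator-valued remainder must reassemble, after the changes of variable $r\mapsto r+t-2n\nu$ and $u\mapsto 2s-u$ occurring in \eqref{CkNu}, into exactly the five terms of that formula, hence into $C_{k^{*(n+1)}}(t)x$; this is the operator analogue of the scalar identity of Lemma \ref{lemaclave}(a)--(b), with the scalar product $(\chi*k^{*n})(n\nu)(\chi*k)(t-n\nu)$ replaced by $2C_{k^{*n}}(n\nu)C_k(t-n\nu)$, the factor $2$ reflecting the d'Alembert (cosine) structure. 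Simultaneously the scalar terms, which are products of the shape $(\chi*k^{*n})(a)(\chi*k)(b)$ with $a+b$ and $|a-b|$ tied to $t$ and $n\nu$, collapse by Lemma \ref{lemaclave} into $(\chi*k^{*(n+1)})(t)=(\chi*k*k^{*n})(t)$. Matching these two collapses against \eqref{CkNu} term by term, while tracking the signs and the boundary contributions produced when $A$ is moved past the weight $(t-s)$, is the delicate step; once it is carried out, identity (ii) holds on $[n\nu,(n+1)\nu]$ as well, closing the induction and completing the proof.
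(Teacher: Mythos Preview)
Your proposal is correct and follows the same route as the paper: induction on $n$, Lemma~\ref{propK} on $[0,n\nu]$, and on $[n\nu,(n+1)\nu]$ a term-by-term Fubini/integration-by-parts reduction of $\int_0^t(t-s)C_{k^{*(n+1)}}(s)x\,ds$ (the paper splits it as $J+I_1+I_2+I_3-I_4-I_5$) to integrals of $(I*C_{k^{*n}})$ and $(I*C_k)$, followed by application of $A$ via the Duhamel identities from the inductive hypothesis and the scalar collapse via Lemma~\ref{lemaclave}. One minor correction: Proposition~\ref{propi} plays no role here---it concerns derivatives of $\mathcal D_+$ test functions under $\ast$ and $\ast_c$ and is used only later for Theorem~\ref{main2}; the calculus you actually need is plain Fubini and integration by parts with the kernels $k$, $k^{*n}$, $\chi*k$, $I*k$.
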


\begin{proof}
Is simple verify that $k^{*(n+1)} \in L^1_{loc}([0,(n+1)\tau))$ and that $\, k^{*(n+1)}\,$ is not identical to zero.
$(C_{k^{*(n+1)}}(t))_{t \in [0,(n+1)\nu]}$ is a strongly continuous operator family which commutes with $A.$
By Proposition \ref{propK}  one gets that $((k*C_{k^{*n}}(t))_{t \in [0,n\nu]}$ is a local $k^{*(n+1)}-$
convoluted cosine function with generator $A$ and consequently (ii) of Definition \ref{cos-con} holds for all $t \in [0, n\nu]$
and $x \in X.$ It remains to be show that this condition is true for every $t \in [n\nu, (n+1)\nu]$ and $x \in X.$
 Write
\begin{equation}{\label{intcompl}}
\int_0^t (t-s) C_{k^{*(n+1)}}(s) x \, ds = \int_0^{n\nu} (t-s) C_{k^{*(n+1)}}(s) x \, ds + \int_{n\nu}^t (t-s) C_{k^{*(n+1)}}(s) x \, ds := J + I,
\end{equation}
\noindent but, $
\begin{array}{l} \displaystyle J =  \displaystyle  \int_0^{n\nu} (n\nu-s) C_{k^{*(n+1)}}(s) x \, ds +
(t- n\nu) \int_0^{n\nu}  C_{k^{*(n+1)}}(s) x \, ds. \\
\end{array}
$
\medskip

 Let  $I = I_1 +  I_2 + I_3 -  I_4  - I_5$ where
 $$
\begin{array}{l}
I_1  =
\displaystyle  2 C_{k^{*n}}(n\nu) \int_{n\nu}^t (t-s) C_{k}(s - n\nu)x \, ds\,, \  \ \ \
 I_2 =  \displaystyle \int_{n\nu}^t (t-s)\int_0^{n\nu} k(s-r) C_{k^{*n}}(r) x \, dr \ ds, \\
 \\
 I_3 =  \displaystyle \int_{n\nu}^t (t-s) \int_0^{s - n\nu} k^{*n}(s-r) C_{k}(r) x \, dr \ ds, \\
 \\
I_4 =  \displaystyle \int_{n\nu}^t (t-s) \int_{2n\nu - s}^{n\nu} k(r + s - 2n\nu ) C_{k^{*n}}(r) x \, dr \ ds, \\
 \\
I_5 =  \displaystyle \int_{n\nu}^t (t-s) \int_0^{s-n\nu} k^{*n}(r-s+ 2n\nu) C_{k}(r) x \, dr \ ds.
\end{array}
$$

\noindent We have $I_1 = \displaystyle 2 C_{k^{*n}}(n\nu) \int_0^{t-n\nu} (t- n\nu -s) C_{k}(s)x \, ds.$ For $I_2$ note that
$$
I_2 = \displaystyle \int_0^{n\nu} C_{k^{*n}}(r) x \, \int_{n\nu}^t (t-s) k(s-r)  \, ds \, dr.
$$
From the simple equality
$$\displaystyle \int_{n\nu}^t (t-s) k(s-r) ds = (I*k)(t - r) - (t - n\nu)(\chi * k)(n\nu - r) -  (I*k)(n\nu - r),$$
\noindent it follows that
$$
\begin{array}{ll}
I_2  = & \displaystyle \int_0^{n\nu} (I*k)(t - r) C_{k^{*n}}(r) x \, dr - \int_0^{n\nu} (t - n\nu)(\chi * k)(n\nu - r) C_{k^{*n}}(r)  x \, dr
\\
&\\
& - \displaystyle \int_0^{n\nu} (I*k)(n\nu - r)  C_{k^{*n}}(r) x \, dr = I_{2_1} - I_{2_2} - I_{2_3}.
\end{array}
$$

First, we compute $I_{2_1}.$
$$
\begin{array}{l}
I_{2_1} = \displaystyle \int_0^{n\nu} C_{k^{*n}}(r) x \, \int_0^{t-r} (t - r - s) k(s) \,  ds \, dr \\
\\
=  \displaystyle \int_0^{t - n\nu} k(s) \int_0^{n\nu} (t-r - s) C_{k^{*n}}(r) x \, dr \, ds \   + \  \displaystyle \int_{t - n\nu}^t k(s) \int_0^{t-s} (t-r-s) C_{k^{*n}}(r)x \, dr \, ds, \\
\end{array}
$$

\noindent with  change of variable $y = t-s$ we have
$$
 \displaystyle \int_{t - n\nu}^t k(s) \int_0^{t-s} (t-r-s) C_{k^{*n}}(r)  x \, dr \, ds = \displaystyle \int_0^{n\nu} k(t - y) \int_0^{y} (y -r) C_{k^{*n}}(r) x \, dr \, dy,
$$
on the other hand,
$$
\begin{array}{l}
\displaystyle \int_0^{t - n\nu} k(s) \int_0^{n\nu} (t-r-s) C_{k^{*n}}(r) x \, dr \, ds \   = \  \displaystyle  \int_0^{n\nu} C_{k^{*n}}(r)x \int_0^{t - n\nu}(t-r-s)  k(s)  \, ds \, dr \\
 \\
= \displaystyle (I*k)(t- n\nu) \int_0^{n\nu} C_{k^{*n}}(r)x \, dr   +  \displaystyle (\chi * k)(t - n\nu) \int_0^{n\nu}(n\nu - r) C_{k^{*n}}(r)x  \, dr. \\
\end{array}
$$

Hence,
$$
\begin{array}{rl}
I_{2_1}  = &   \displaystyle  \int_0^{n\nu} k(t - y) (I*C_{k^{*n}})(y) x \, dy \   +  \displaystyle (I*k)(t- n\nu) (\chi*C_{k^{*n}})(n\nu)   \\
& \\
& + \ \displaystyle (\chi * k)(t - n\nu) (I*C_{k^{*n}})(n\nu)x.
\end{array}
$$

\noindent Now, for $I_{2_2}$  from  (\ref{Ck0nu})  observe that
$$
I_{2_2} =  (t - n\nu) ((\chi * k)*C_{k^{*n}})(n\nu)x  =  (t - n\nu) (\chi * k*C_{k^{*n}})(n\nu)x = (t - n\nu) (\chi*C_{k^{*(n+1)}})(n\nu)x\,.
$$

We obtain that
$$
\begin{array}{rl}
I_2  = &   \displaystyle  \int_0^{n\nu} k(t - y) (I*C_{k^{*n}})(y) x\, dy \  + \  \displaystyle (I*k)(t- n\nu)  (\chi*C_{k^{*n}})(n\nu)x\,  \\
 \\
&  +  \ \displaystyle (\chi * k)(t - n\nu) (I*C_{k^{*n}})(n\nu)x   - (t - n\nu) (\chi*C_{k^{*(n+1)}})(n\nu)x- \displaystyle  (I*k*C_{k^{*n}})(n\nu)x. \\
\end{array}
$$
\noindent We compute $I_3,$ using  Fubini theorem and integrate by parts,
$$
 \begin{array}{rcl}
I_3 & = & \displaystyle \int_0^{t-n\nu} C_{k}(r) x  \int_{r + n\nu}^t (t-s) k^{*n}(s-r)  \, ds \ dr \\
 & = & \displaystyle \int_0^{t-n\nu} C_{k}(r) x  \int_{n\nu}^{t-r} (t-s -r) k^{*n}(s)  \, ds \ dr \\
  & = & \displaystyle \int_0^{t-n\nu} [(I* k^{*n})(t-r) - (I*k^{*n})(n\nu)]  C_{k}(r) x  \, dr
    - (\chi * k^{*n})(n\nu) \  (I*C_{k})(t-n\nu) \\
    & = &  \displaystyle \int_0^{t - n\nu} \int_0^rC_k(s)ds \ (\chi * k^{*n})(t-r)dr - (\chi * k^{*n})(n\nu) \  (I*C_{k})(t-n\nu) \\
    & = &  (\chi * k^{*n})(n\nu) \  (I*C_{k})(t-n\nu) + \displaystyle \int_0^{t - n\nu}(I*C_k)(r) \ k^{*n}(t-r)dr \\
     && - \   (\chi * k^{*n})(n\nu) \  (I*C_{k})(t-n\nu).
\end{array}
$$
Hence $\, I_3   = \displaystyle \int_0^{t - n\nu}(I*C_k)(r) \ k^{*n}(t-r)dr.$

For $I_4,$ using Fubini theorem and change the variable, we have
$$I_4 =  \displaystyle \int_{2n\nu-t}^{n\nu} C_{k^{*n}}(r) x  \int_{2n\nu - r}^{t} (t-s) k(r + s - 2n\nu ) \, ds \ dr =
\displaystyle \int_{2n\nu-t}^{n\nu}  (I*k)(t + r - 2n\nu) C_{k^{*n}}(r) x dr.$$
We integrate twice by parts to obtain that
$$
\begin{array}{rl}
I_4 = &  (I*k)(t - n\nu) \displaystyle \int_0^{n\nu}  C_{k^{*n}}(r) x dr -  \int_{2n\nu - t}^{n\nu} (\chi * k)(t + r - 2n\nu)  \int_0^r C_{k^{*n}}(\tau) x d\tau \ dr\\
& \\
= &  (I*k)(t - n\nu) (\chi*C_{k^{*n}})(n\nu) x - \displaystyle  (\chi * k)(t - n\nu) (I* C_{k^{*n}})(n\nu)  \\
 &\\
 & + \ \displaystyle  \int_{2n\nu - t}^{n\nu} k(t + r - 2n\nu)  \int_0^r (r - \tau) C_{k^{*n}}(\tau) x d\tau \ dr.\\
\end{array}
$$
Applying the same argumentation, for $I_5,$ again by Fubini
$$I_5 =  \displaystyle \int_0^{t - n\nu} C_{k}(r) x \int_{n\nu+u}^t (t-s) k^{*n}(u + 2n\nu - s)  \, ds \ dr.$$
With the change of variable $s = u + 2n\nu - s$ and integration by parts, we have

$$
\begin{array}{rl}
I_5  = & (\chi * k^{*n})(n\nu) \displaystyle \int_0^{t - n\nu}  (t - n\nu - r)  C_{k}(r) x  \, dr -  (I*k^{*n})(n\nu) \displaystyle \int_0^{t - n\nu}   C_{k}(r) x  \, dr\\
   & + \  \displaystyle \int_0^{t - n\nu}   (I*k^{*n})(r + 2n\nu - t) C_{k}(r) x  \, dr  \\
 = & (\chi * k^{*n})(n\nu) \displaystyle \int_0^{t - n\nu}  (t - n\nu - r)  C_{k}(r) x  \, dr -  (I*k^{*n})(n\nu) \displaystyle \int_0^{t - n\nu}   C_{k}(r) x  \, dr\\
  & + \  (I*k^{*n})(n\nu) \displaystyle \int_0^{t - n\nu}   C_{k}(r) x  \, dr -  \displaystyle \int_0^{t - n\nu}  (\chi * k^{*n})(r + 2n\nu - t) \,  \int_0^r C_{k}(s) x ds \ dr\\
  = & (\chi * k^{*n})(n\nu) \displaystyle \int_0^{t - n\nu}  (t - n\nu - r)  C_{k}(r) x  \, dr  - \displaystyle (\chi * k^{*n})(n\nu) \int_0^{t - n\nu}  \, (t - n \nu  - s)C_{k}(s) x ds \\
   & + \  \displaystyle \int_0^{t - n\nu} k^{*n}(r + 2n\nu - t) \,  \int_0^r (r - s)C_{k}(s) x ds \ dr.\\
 \end{array}
$$
\noindent Hence $\, I_5 =  \displaystyle \int_0^{t - n\nu} k^{*n}(r + 2n\nu - t) \, (I*C_{k})(r) \ dr.$ Adding $I_1 +  I_2 + I_3 -  I_4  - I_5,$ it follows that
$$
\begin{array}{rl}
I = & \displaystyle 2\,  C_{k^{*n}}(n\nu) (I*C_{k})(t- n\nu)x  + \displaystyle 2 (\chi * k)(t - n\nu)(I*C_{k^{*n}})(n\nu)x
    \\
& \\
  & -  \ (t - n\nu) (\chi*C_{k^{*(n+1)}})(n\nu) + \displaystyle  \int_0^{n\nu} [k(t - y) - k(n\nu -y)](I*C_{k^{*n}})(y)x dy
   \\
   & \\
     & +  \ \displaystyle \int_0^{t - n\nu}(I*C_k)(r) \ k^{*n}(t-r)dr  -
     \displaystyle  \int_{2n\nu - t}^{n\nu} k(t + r - 2n\nu) (I*C_{k^{*n}})(r)x  \, dr \\
     & \\
    &  - \  \displaystyle \int_0^{t - n\nu} k^{*n}(r + 2n\nu - t) \, (I*C_{k})(r)x  \, dr.

\end{array}
$$

From (\ref{intcompl}), for every $t \in [n\nu, (n+1)\nu]$ and $x \in X,$  we obtain

\vspace{0.2cm}

$$
\begin{array}{l}
\displaystyle \int_0^t (t-s) C_{k^{*(n+1)}}(s) x \, ds=  \displaystyle  \int_0^{n\nu} (n\nu-s) C_{k^{*(n+1)}}(s) x \, ds +
\displaystyle 2 C_{k^{*n}}(n\nu) (I*C_{k})(t- n\nu)x  \\
 \\
 +  \ \displaystyle 2 \, (\chi * k)(t - n\nu)(I*C_{k^{*n}})(n\nu)x +  \displaystyle \int_0^{t - n\nu}(I*C_k)(r) \ k^{*n}(t-r)dr    \\
 \\
 +  \displaystyle  \int_0^{n\nu} [k(t - y) - k(n\nu -y)](I*C_{k^{*n}})(y)x dy
  -   \displaystyle  \int_{2n\nu - t}^{n\nu} k(t + r - 2n\nu) (I*C_{k^{*n}})(r)x  \, dr \\
   \\
    -   \displaystyle \int_0^{t - n\nu} k^{*n}(r + 2n\nu - t) \, (I*C_{k})(r)x  \, dr.
\end{array}
$$

The last equality implies $\displaystyle \int_0^t (t-s) C_{k^{*(n+1)}}(s) x \, ds \in D(A)$ and

$$
\begin{array}{rl}
A\displaystyle \int_0^t (t-s) C_{k^{*(n+1)}}(s) x \, ds= & C_{k^{*(n+1)}}(t)x  - 2 (\chi * k)(t-n\nu) \ (\chi * k^{*n})(n\nu) \\
- & \displaystyle \int_0^{t - n\nu} k^{*n}(t-r)(\chi * k)(r) dr  - \int_0^{n\nu} k(t-r)(\chi * k^{*n})(r) dr \\
+ & \displaystyle \int_{2n\nu - t}^{n\nu} k(r + 2n\nu - t) \, (\chi * k^{*n})(r)x  \, dr \\
 + & \displaystyle \int_0^{t-n\nu}k^{*n}(r + 2n\nu - t) \, (\chi * k)(r)x  \, dr.
\end{array}
$$
From Lemma \ref{lemaclave} we obtain that $\, A\displaystyle \int_0^t (t-s) C_{k^{*(n+1)}}(s) x \, ds=  C_{k^{*(n+1)}}(t)x - (\chi * k^{*(n+1)})(t)x$ for $t\ge 0$ and $x \in X.$
\end{proof}

\begin{remark}\label{c-op} {\rm  Local $k$-convoluted $C$-cosine functions were introduced and studied by Kosti\'c in \cite{K1, K2, ko} with $C$ an injective operator. The main idea is to compose  a local $k$-convoluted cosine function (possibly a family of unbounded operators) with  an injective operator $C$ to get a family of bounded operators, i.e, we have to replace the condition (ii) in Definition \ref{cos-con} by
$$
A \int_0^t(t-s) C_k(s)x ds = C_k(t)x - (\chi * k)(t)Cx, \qquad x\in X, \quad t\in [0, \tau),
$$
and to add a commutative condition, $C_k(t)C=CC_k(t)$,  for $t\in [0, \tau)$, see \cite[Definition 1.1]{K2}. Note that for $C=I_X$, we obtain the local $k$-convoluted cosine function.

A similar result to Theorem \ref{exten} for $k$-convoluted $C$-cosine function holds and the proof is left to the reader. The main point is to express the extension formula given in (\ref{CkNu}) and in this case,
$$
  \begin{array}{rl}
C_{k^{*(n+1)}}(t)x = & \displaystyle  2 C_{k^{*n}}(n\nu) C_{k}(t - n\nu)x  +  \int_0^{n\nu} k(t-r) C_{k^{*n}}(r) Cx \, dr  \\
& \\
 & + \ \displaystyle  \int_0^{t - n\nu} k^{*n}(t-r) C_{k}(r)C x \, dr  -  \int_{2n\nu - t}^{n\nu} k(r + t -2 n\nu ) C_{k^{*n}}(r) Cx \, dr  \\
& \\
  & - \ \displaystyle \int_0^{t-n\nu} k^{*n}(r-t+2 n \nu) C_{k}(r)C x \, dr.\\
\end{array}
$$
Note that we recover  the extension theorem given  \cite[Theorem 3.1]{K2} and \cite[Theorem 2.1.1.14]{ko}  for   $n=1$. Moreover, if we iterate  \cite[Theorem 3.1]{K2}, then we obtain only the extension for $n=2^m$ and $m\in \N$. By uniqueness  of solution of problem (\ref{(ACP_2)}) both expressions coincides.

Note that the extension problem   for $C$-cosine functions may be also found in \cite[Theorem 3.6, Corollary 3.8]{[WG]}.
}
\end{remark}

In the particular case of $k=j_\alpha$ we obtain the next result for $\alpha$-times integrated cosine functions.

\begin{corollary}
Let $n \in \mathbb{N}, 0 < \tau \le \infty$ and $(C_{\alpha}(t))_{t \in [0, \tau)}$ is a local $\alpha-$times integrated cosine function generated by $A$. The family of operators  $(C_{\alpha(n+1)}(t))_{t \in [0, (n+1) \nu]}$
defined for $t \in [0, n\nu]$ by
\begin{equation}{\label{Ck0nu}}
C_{\alpha(n+1)}(t)x = \int_0^t \frac{(t-r)^{\alpha -1}}{\Gamma(\alpha)} C_{\alpha n}(r)x \, dr\,,
\end{equation}
\noindent and for  $t \in [n\nu, (n+1)\nu]$ as
\begin{equation}{\label{CkNu}}
  \begin{array}{rl}
C_{\alpha(n+1)}(t)x = & \displaystyle  2 C_{\alpha n}(n\nu) C_{\alpha}(t - n\nu)x  +  \int_0^{n\nu} \frac{(t - r)^{\alpha - 1 }}{\Gamma(\alpha)} C_{\alpha n}(r) x \, dr  \\
& \\
 & + \ \displaystyle  \int_0^{t - n\nu} \frac{(t-r)^{n\alpha -1}}{\Gamma(n\alpha)} C_{\alpha}(r) x \, dr  -  \int_{2n\nu - t}^{n\nu} \frac{(r + t -2 n\nu )^{\alpha-1}}{\Gamma(\alpha)} C_{\alpha n}(r) x \, dr  \\
& \\
 & - \  \displaystyle \int_0^{t-n\nu} \frac{(r-t+2 n \nu)^{n\alpha -1}}{\Gamma(n\alpha)} C_{\alpha}(r) x \, dr,\\
\end{array}
\end{equation}
\noindent  is a local  $\alpha(n+1)-$times  integrated cosine function with generator $A$ for any $\nu < \tau$ and  $x \in X$.
\end{corollary}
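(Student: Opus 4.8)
The plan is to deduce the Corollary directly from Theorem \ref{exten} by specializing to $k=j_\alpha$, where $j_\alpha(t)=t^{\alpha-1}/\Gamma(\alpha)$. Recall that, by Definition \ref{cos-con}, a local $j_\alpha$-convoluted cosine function is exactly what is called a local $\alpha$-times integrated cosine function, so $C_\alpha=C_{j_\alpha}$ and, more generally, $C_{\alpha m}=C_{(j_\alpha)^{*m}}$ once we know how the convolution powers of $j_\alpha$ behave. Thus the whole argument reduces to identifying $(j_\alpha)^{*n}$ and $(j_\alpha)^{*(n+1)}$ as scaled Riemann--Liouville kernels and then substituting the resulting explicit expressions into formulas (\ref{Ck0nu}) and (\ref{CkNu}) of Theorem \ref{exten}.

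First I would establish the semigroup identity for the kernels, namely $j_\alpha\ast j_\beta=j_{\alpha+\beta}$ for $\alpha,\beta>0$. This is the classical Beta-integral computation: for $t\ge 0$,
$$(j_\alpha\ast j_\beta)(t)=\frac{1}{\Gamma(\alpha)\Gamma(\beta)}\int_0^t (t-s)^{\alpha-1}s^{\beta-1}\,ds=\frac{B(\alpha,\beta)}{\Gamma(\alpha)\Gamma(\beta)}\,t^{\alpha+\beta-1}=\frac{t^{\alpha+\beta-1}}{\Gamma(\alpha+\beta)}=j_{\alpha+\beta}(t),$$
using $B(\alpha,\beta)=\Gamma(\alpha)\Gamma(\beta)/\Gamma(\alpha+\beta)$. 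An immediate induction then gives $(j_\alpha)^{*n}=j_{n\alpha}$, so that $k^{*n}(t)=t^{n\alpha-1}/\Gamma(n\alpha)$ and $k^{*(n+1)}(t)=t^{(n+1)\alpha-1}/\Gamma((n+1)\alpha)$. In particular $k^{*(n+1)}=j_{(n+1)\alpha}$ is not identically zero and lies in $L^1_{loc}$, so the hypotheses of Theorem \ref{exten} are met and $C_{k^{*(n+1)}}$ is precisely the $(n+1)\alpha$-times integrated cosine function written $C_{\alpha(n+1)}$ in the statement.

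Finally I would substitute these kernels into the two branches of the extension formula. On $[0,n\nu]$ the kernel $k(t-r)$ in (\ref{Ck0nu}) becomes $(t-r)^{\alpha-1}/\Gamma(\alpha)$ and $C_{k^{*n}}=C_{\alpha n}$, giving the stated formula for $C_{\alpha(n+1)}$ there. On $[n\nu,(n+1)\nu]$ each occurrence of $k$ in (\ref{CkNu}) is replaced by $(\cdot)^{\alpha-1}/\Gamma(\alpha)$ and each occurrence of $k^{*n}$ by $(\cdot)^{n\alpha-1}/\Gamma(n\alpha)$, which reproduces line by line the five terms displayed in the Corollary. Since Theorem \ref{exten} guarantees that $C_{k^{*(n+1)}}$ is a local $k^{*(n+1)}$-convoluted cosine function with generator $A$, the translated family is a local $(n+1)\alpha$-times integrated cosine function with generator $A$, as claimed. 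There is essentially no obstacle here beyond the convolution identity: the content of the Corollary is entirely carried by Theorem \ref{exten}, and the only thing genuinely to verify is the Beta-integral evaluation of the convolution powers of $j_\alpha$.
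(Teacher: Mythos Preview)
Your proposal is correct and follows exactly the paper's approach: the Corollary is stated immediately after Theorem \ref{exten} with the preface ``In the particular case of $k=j_\alpha$ we obtain the next result,'' and no further proof is given. Your explicit verification of $(j_\alpha)^{*n}=j_{n\alpha}$ via the Beta integral is precisely the routine identification the paper leaves to the reader.
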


\section{Algebra homomorphisms defined by convoluted cosine functions}
\setcounter{theorem}{0}
\setcounter{equation}{0}

In this section, we show that local $k$-convoluted cosine functions  are kernels to define algebra homomorphism from a certain set of test functions $\mathcal{D}_{k^{\ast\infty}}$ (Theorem \ref{main2}). Note that the extension theorem (Theorem \ref{exten}) is necessary to define the algebra homomorphisms from functions defined on $\R^+$. The set $\mathcal{D}_{k^{\ast\infty}}$ was introduced  in the context of local $k$-convoluted semigroups and $k$-distribution semigroups in $\mathcal{D}_{k^{\ast\infty}}$. However the cosine convolution $\ast_c$ (instead of  $\ast$ convolution in semigroup setting) is the product which we must consider in the cosine setting, see Remarks \ref{notas} (iii).

\begin{theorem} {\label{main2}} Let $k\in L^1_{loc}(\mathbb{R}^+)$ with $0\in \hbox{supp}(k)$,
and $(C_k(t))_{t \in [0,\tau]}$ a non-degenerate local  $k$-convoluted cosine
function generated by $A$.  We define the map ${\mathcal C}_{k}: \mathcal{D}_{k^{\ast\infty}}\to
{\mathcal B}(X)$ by
$$
{\mathcal C}_{k}(f)x:=\int_0^{n\tau} W_{k^{\ast n}} f(t)C_{k^{\ast n}}(t)xdt, \qquad x\in X, \hbox{supp(}f)\subset [0,n\tau], n\in \N,
$$
  where $(C_{k^{\ast n}}(t))_{t \in [0,n\tau]}$ is defined in Theorem \ref{exten}. Then the following properties hold.

\begin{itemize}
\item[(i)] The map ${\mathcal C}_k$ is well defined, linear and bounded.
\item[(ii)] For $\phi, \psi\in  \mathcal{D}_{k^{\ast\infty}}$, we get that
$${\mathcal C}_k(\phi\ast_c \psi)={\mathcal C}_k(\phi){\mathcal C}_k(\psi).
$$
\item[(iii)] ${\mathcal C}_k(f)x\in D(A)$ and $A {\mathcal C}_k(f)x= {\mathcal C}_k(f'')x+f'(0)x$ for any $f\in \mathcal{D}_{k^{\ast\infty}}$ and $x\in X$.
\end{itemize}
\end{theorem}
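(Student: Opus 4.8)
The plan is to establish the three properties of the map $\mathcal{C}_k$ in order, since each builds naturally on the structural tools assembled in Sections 2 and 3. The essential point throughout is that $\mathcal{C}_k(f)$ is defined by integrating the extended convoluted cosine function $C_{k^{*n}}$ against the Weyl-type right inverse $W_{k^{*n}}f$, and the consistency of this definition across different $n$ is exactly what Proposition \ref{equivalencia}(ii) guarantees.

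For part (i), I would first check that the definition does not depend on the choice of $n$ with $\hbox{supp}(f)\subset[0,n\tau]$. If $\hbox{supp}(f)\subset[0,m\tau]$ with $m\le n$, then using Proposition \ref{equivalencia}(ii), namely $W_{k^{*m}}f = k^{*(n-m)}\circ W_{k^{*n}}f$, together with the defining relation $C_{k^{*n}}=k^{*(n-m)}*C_{k^{*m}}$ coming from (\ref{Ck0nu}) in Theorem \ref{exten}, and the adjunction identity $\int_0^\infty h(t)(g*f)(t)\,dt=\int_0^\infty f(t)(g\circ h)(t)\,dt$ stated in Section 2, I would transfer the integral against $C_{k^{*n}}$ into an integral against $C_{k^{*m}}$ and see the two formulas agree. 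Linearity is immediate; boundedness follows from strong continuity of the family and compactness of $\hbox{supp}(f)$, which bound $\|C_{k^{*n}}(t)\|$ on $[0,n\tau]$ uniformly and give $\|\mathcal{C}_k(f)x\|\le C\|x\|$.

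For part (iii), the idea is to integrate against property (ii) of Definition \ref{cos-con}. Writing $W_{k^{*n}}f$ against the defining relation $A\int_0^t(t-s)C_{k^{*n}}(s)x\,ds = C_{k^{*n}}(t)x-(\chi*k^{*n})(t)x$, one integrates by parts twice in $t$, transferring two derivatives from the kernel onto $W_{k^{*n}}f$ and producing the second-derivative term $\mathcal{C}_k(f'')x$; the boundary terms collapse to $f'(0)x$ once I use that $\hbox{supp}(W_{k^{*n}}f)$ is compact (Proposition \ref{equivalencia}(i)) and that $k^{*n}*W_{k^{*n}}f=f$ from Proposition \ref{equivalencia}(iii) annihilates the $(\chi*k^{*n})$ correction term up to the $f'(0)x$ boundary contribution. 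Closedness of $A$ legitimizes pulling $A$ out of the integral.

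The main obstacle is part (ii), the homomorphism property for $\ast_c$. Here the natural approach is to substitute the definitions of $\mathcal{C}_k(\phi)$ and $\mathcal{C}_k(\psi)$, obtaining a double integral in $(t,s)$ of $W_{k^{*n}}\phi(t)\,W_{k^{*n}}\psi(s)$ against the product $C_{k^{*n}}(t)C_{k^{*n}}(s)$. The key is a composition (or d'Alembert-type) identity for $k$-convoluted cosine functions expressing $C_{k^{*n}}(t)C_{k^{*n}}(s)x$ in terms of integrals of $C_{k^{*2n}}$, which after integration against the test functions should produce precisely $\mathcal{C}_k(\phi\ast_c\psi)$; this is where the combinatorial identities of Lemma \ref{lemaclave}, relating products $(\chi*g)(t)(\chi*f)(s)$ to the convolution and dual-convolution kernels, do the real work. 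The delicate steps will be matching the three pieces $\phi\ast\psi$, $\phi\circ\psi$, $\psi\circ\phi$ in the definition (\ref{cosine}) of $\ast_c$ to the three symmetric integral regions arising from the product of cosine functions, and then invoking $W_{k^{*2n}}$ and Proposition \ref{equivalencia}(ii)--(iii) to rewrite everything consistently at level $2n$ so that the identity $\mathcal{C}_k(\phi)\mathcal{C}_k(\psi)=\mathcal{C}_k(\phi\ast_c\psi)$ emerges. I expect the bookkeeping of integration limits, rather than any conceptual difficulty, to be the principal challenge, with Proposition \ref{propi}(iv)--(v) supplying the identities needed to reconcile derivatives under $\ast_c$.
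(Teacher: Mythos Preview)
Your outline for parts (i) and (iii) matches the paper's argument closely: well-definedness via Proposition~\ref{equivalencia}(i)--(ii) together with $C_{k^{*n}}=k^{*(n-m)}*C_{k^{*m}}$ and the $\ast/\circ$ adjunction, and the generator identity via Definition~\ref{cos-con}(ii) plus two integrations by parts and the commutation $W_{k^{*n}}(f'')=(W_{k^{*n}}f)''$.

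For part (ii) your high-level picture is right---the heart of the argument is indeed the composition property of convoluted cosine functions (cited in the paper as \cite[Theorem~2.4]{K2}), which expresses $C_{k^{*2n}}(t)C_{k^{*2n}}(s)$ as a combination of integrals of $k^{*2n}(\cdot)C_{k^{*2n}}(\cdot)$ over the three regions that match the three pieces of $\ast_c$. However, you misidentify the auxiliary tools. Lemma~\ref{lemaclave} plays no role in this proof; it was already consumed in proving Theorem~\ref{exten}. Likewise Proposition~\ref{propi}(iv)--(v) is not used here. What the paper actually needs, and what your sketch is missing, is an explicit formula for $W_{k^{*2n}}(f\ast g)$ in terms of $W_{k^{*2n}}f$ and $W_{k^{*2n}}g$---this is taken from \cite[Theorem~2.10]{KLM}, and without it you cannot unpack $\mathcal{C}_k(f\ast_c g)$ into a double integral against $W_{k^{*2n}}f(s)W_{k^{*2n}}g(r)$ that can then be matched to the composition identity. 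Once that formula is in hand, the rest is Fubini bookkeeping and support considerations (using $\hbox{supp}(W_{k^{*2n}}f),\hbox{supp}(W_{k^{*2n}}g)\subset[0,n\tau]$ to kill spurious terms), exactly as you anticipate; but the external input from \cite{KLM} is the step you would not be able to improvise from Lemma~\ref{lemaclave} alone.
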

\begin{proof}

First we  prove (i). Take $f\in \mathcal{D}_{k^{\ast \infty}}$ and $supp(f)\subset [0, n\tau]$ for some $n\in \N$. Let $m\ge n$, $k^{\ast m}= k^{\ast n}\ast k^{\ast (m-n)}$, and   $k^{\ast(m-n)}\circ W_{k^{\ast m}}f =W_{k^{\ast n}}f  $ and $supp(W_{k^{\ast m}}f)\subset [0, n\tau]$
by Proposition \ref{equivalencia} (ii) and (i) respectively. By Lemma \ref{propK} and the Fubini theorem, we get that
\begin{eqnarray*}
&\,&\int_0^{m\tau} W_{k^{\ast m}} f(t)C_{k^{\ast m}}(t)xdt=
\int_0^{n\tau} W_{k^{\ast m}} f(t)(k^{\ast(m-n)}\ast C_{k^{\ast n}})(t)xdt\cr
&\,&=\int_0^{n\tau}\left( k^{\ast(m-n)}\circ W_{  k^{\ast m}} f\right)(t) C_{k^{\ast n}}(t)xdt= \int_0^{n\tau} W_{k^{\ast n}} f(t)C_{k^{\ast n}}(t)xdt
\end{eqnarray*}
for $x\in X$ and we conclude  that ${\mathcal C}_k$ is well defined. It is direct to check that ${\mathcal C}_k$ is  linear and bounded.

Take $f, g \in \mathcal{D}_{k^{\ast \infty}}$. By \cite[Theorem 2.10]{KLM} we have that  $f\ast_c g\in \mathcal{D}_{k^{\ast n}}$ for $n \ge 1$ and then
   $f\ast_c g\in \mathcal{D}_{k^{\ast \infty}}$. Next  we show that ${\mathcal C}_k(f\ast_c g )={\mathcal C}_k(f)
{\mathcal C}_k(g)$. Take $n\in \N$ such that $supp(f), supp(g)\subset [0,n \tau]$ and by Proposition \ref{equivalencia} (i), $supp(W_{k^{\ast 2n}}f), supp(W_{k^{\ast 2n}}g)\subset [0,n \tau]$. Then $supp(f\ast g)\subset [0,2n \tau]$ and $supp(W_{k^{\ast 2n}}(f\ast g))\subset [0,2n \tau]$.  By the definition  of the cosine product $\ast_c$ (\ref{cosine}) and  \cite[Theorem 2.10]{KLM} we have that
$$
{{\mathcal C}}_k(f\ast_c g)x ={1\over 2}\int_0^{2n\tau}\left(W_{k^{\ast 2n}}(f\ast g)+ f\circ
W_{k^{\ast 2n}}g + g\circ W_{k^{\ast 2n}}f\right)(t)C_{k^{\ast 2n}}(t)xdt,
$$
\noindent for $x \in X.$ Again by \cite[Theorem 2.10]{KLM}, we obtain that

\vspace{0.2cm}

$\displaystyle \int_0^{2n\tau}W_{k^{\ast 2n}}(f*g)(t) C_{k^{\ast 2n}}(t)x$
$$
\begin{array}{rl}
 = & \displaystyle \int_0^{2n\tau} C_{k^{\ast 2n}}(t)x \int_0^t W_{k^{\ast 2n}} g(r) \int_{t-r}^t{k^{\ast 2n}}(s+r-t) W_{k^{\ast 2n}} f(s) \, ds dr \,  dt \\
& \\
 & - \ \displaystyle \int_0^{2n\tau} C_{k^{\ast 2n}}(t)x \int_t^{\infty} W_{k^{\ast 2n}} g(r) \int_{t}^{\infty}{k^{\ast 2n}}(s+r-t) W_{k^{\ast 2n}} f(s) \, ds dr \,  dt.\\
\end{array}
$$
 We apply   Fubini theorem to get

\vspace{0.2cm}

$\displaystyle \int_0^{2n\tau} W_{k^{\ast 2n}}(f\ast
g)(t)C_{k^{\ast 2n}}(t)xdt $
$$
\begin{array}{rl}
= & \displaystyle \int_0^{2n\tau}W_{k^{\ast 2n}}g(r)\int_0^{r}W_{k^{\ast 2n}}f(s)
\left(\int_{r}^{s+r}- \int_{0}^{s}{k^{\ast 2n}}(s+r-t)C_{k^{\ast 2n}}(t)xdt\right)dsdr
\\
& + \ \displaystyle \int_0^{2n\tau}W_{k^{\ast 2n}}g(r)\int_r^{2n\tau - r}W_{k^{\ast 2n}}f(s)
\left(\int_{s}^{s+r}-
\int_{0}^{r}{k^{\ast 2n}}(s+r-t)C_{k^{\ast 2n}}(t)xdt\right)dsdr\\
& \\
 & + \  \displaystyle \int_0^{2n\tau}W_{k^{\ast 2n}}g(r)\int_{2n\tau - r}^{2n\tau} W_{k^{\ast 2n}}f(s)
\int_{s}^{2n\tau}
{k^{\ast 2n}}(s+r-t)C_{k^{\ast 2n}}(t)xdt \, ds dr.
\end{array}
$$

For the last integral in the above  equality,  if $r \le n\tau$ then $\displaystyle 2n\tau - r \ge n{\tau},$ and
$$
\begin{array}{l}
\displaystyle \int_0^{n\tau}W_{k^{\ast 2n}}g(r)\int_{2n\tau - r}^{2n\tau} W_{k^{\ast 2n}}f(s)
\int_{s}^{2n\tau}
{k^{\ast 2n}}(s+r-t)C_{k^{\ast 2n}}(t)xdt \, ds dr  = 0 \\
\end{array}
$$

\noindent since $\hbox{supp}(W_{k^{\ast 2n}} f) \subset [0,n\tau].$ Analogously, if $r > n\tau$ then
$$
\begin{array}{l}
\displaystyle \int_{n\tau}^{2n\tau}W_{k^{\ast 2n}}g(r)\int_{2n\tau - r}^{2n\tau} W_{k}f(s)
\int_{s}^{2n\tau}
{k^{\ast 2n}}(s+r-t)C_{k^{\ast 2n}}(t)xdt \, ds dr  = 0 \\
\end{array}
$$
since $\hbox{supp}(W_{k^{\ast 2n}} g) \subset [0,n\tau].$ Hence

\vspace{0.2cm}
$\displaystyle \int_0^{2n\tau} W_{k^{\ast 2n}}(f\ast
g)(t)C_{k^{\ast 2n}}(t)xdt $
$$
\begin{array}{rl}
= & \displaystyle \int_0^{n\tau}W_{k^{\ast 2n}}g(r)\int_0^{r}W_{k^{\ast 2n}}f(s)
\left(\int_{r}^{s+r}- \int_{0}^{s}{k^{\ast 2n}}(s+r-t)C_{k^{\ast 2n}}(t)xdt\right)dsdr
\\
 & + \ \displaystyle \int_0^{n\tau}W_{k^{\ast 2n}}g(r)\int_r^{n\tau }W_{k^{\ast 2n}}f(s)
\left(\int_{s}^{s+r}-
\int_{0}^{r}{k^{\ast 2n}}(s+r-t)C_{k^{\ast 2n}}(t)xdt\right)dsdr.\\
\end{array}
$$

\vspace{0.2cm}

On the other hand, by the Fubini theorem we get that

$$
\int_0^{2n\tau} (f\circ
W_{k^{\ast 2n}}g)(t)C_{k^{\ast 2n}}(t)xdt = \int_0^{2n\tau}W_{k^{\ast 2n}}g(r)\int_0^r
f(r-t)C_{k^{\ast 2n}}(t)xdtdr
$$

\noindent  since $f(r-t)=({k^{\ast 2n}}\circ W_{k^{\ast 2n}}f)(r-t)$, we have  that

$\displaystyle \int_0^{2n\tau} (f\circ W_{k^{\ast 2n}}g)(t)C_{k^{\ast 2n}}(t)xdt$
$$
\begin{array}{rl}
 = & \displaystyle \int_0^{2n\tau}W_{k^{\ast 2n}}g(r)\int_0^{r}C_{k^{\ast 2n}}(t)x \int_{r-t}^{2n\tau}{k^{\ast 2n}}(s-r+t)W_{k^{\ast 2n}}f(s) ds dt dr \\
& \\
= & \displaystyle \int_0^{n\tau}W_{k^{\ast 2n}}g(r)\int_0^{r}W_{k^{\ast 2n}}f(s)\int_{r-s}^{r}{k^{\ast 2n}}(s-r+t)C_{k^{\ast 2n}}(t)xdtdsdr
\\
& \\
 & + \ \displaystyle \int_0^{n\tau}W_{k^{\ast 2n}}g(r)\int_r^{n\tau}W_{k^{\ast 2n}}f(s)
\int_{0}^{r}{k^{\ast 2n}}(t+s-r)C_{k^{\ast 2n}}(t)xdtdsdr.\\
\end{array}
$$
In similar way, we also get that

$\displaystyle \int_0^{2n\tau} (g\circ W_{k^{\ast 2n}}f)(t)C_{k^{\ast 2n}}(t)xdt$
$$
\begin{array}{rl}
 =
& \displaystyle \int_0^{2n\tau}W_{k^{\ast 2n}}f(r)\int_0^{r}W_{k^{\ast 2n}}g(s)\int_{r-s}^{r}{k^{\ast 2n}}(s-r+t)C_{k^{\ast 2n}}(t)xdtdsdr
\\
& \\
 & + \  \displaystyle \int_0^{2n\tau}W_{k^{\ast 2n}}f(r)\int_r^{2n\tau}W_{k^{\ast 2n}}g(s)
\int_{0}^{r}{k^{\ast 2n}}(s-r + t)C_{k^{\ast 2n}}(t)xdtdsdr.\\
\end{array}
$$
We apply Fubini theorem,  change the variable and use that $\hbox{supp}(W_{k^{\ast 2n}} f), \hbox{supp}(W_{k^{\ast 2n}} g) \subset [0,n\tau]$ to obtain that

$\displaystyle \int_0^{2n\tau} (g\circ W_{k^{\ast 2n}}f)(t)C_{k^{\ast 2n}}(t)xdt$
$$
\begin{array}{rl}
= & \displaystyle \int_0^{n\tau}W_{k^{\ast 2n}}g(r)\int_{r}^{n\tau} W_{k^{\ast 2n}}f(s)\int_{s-r}^{s}{k^{\ast 2n}}(r-s+t)C_{k^{\ast 2n}}(t)xdtdsdr
\\
& \\
 & + \  \displaystyle \int_0^{n\tau}W_{k^{\ast 2n}}g(r)\int_0^{r}W_{k^{\ast 2n}}f(s)
\int_{0}^{s}{k^{\ast 2n}}(r - s + t)C_{k^{\ast 2n}}(t)xdtdsdr.\\
& \\
\end{array}
$$
We join these six summands and obtain
$$
\begin{array}{l}
2 \, {{\mathcal C}}_k(f\ast_c g)x =  \displaystyle \int_0^{n\tau}W_{k^{\ast 2n}}g(r)\int_0^{r}W_{k^{\ast 2n}}f(s)
\left(\int_{r}^{s+r}- \int_{0}^{s}{k^{\ast 2n}}(s+r-t)C_{k^{\ast 2n}}(t)xdt  \right. \\
\\
\ \ \ + \displaystyle \left.  \int_{r-s}^{r}{k^{\ast 2n}}(s-r+t)C_{k^{\ast 2n}}(t)xdt+
  \int_{0}^{s}{k^{\ast 2n}}(r - s + t)C_{k^{\ast 2n}}(t)xdt \right)dsdr \\
\\
\ \ \ + \displaystyle \int_0^{n\tau}W_{k^{\ast 2n}}g(r)\int_r^{n\tau}W_{k^{\ast 2n}}f(s)
\left(\int_{s}^{s+r}-
\int_{0}^{r}{k^{\ast 2n}}(s+r-t)C_{k^{\ast 2n}}(t)xdt\right. \\
\\
\ \ \ + \displaystyle\left.\int_{0}^{r}{k^{\ast 2n}}(t+s-r)C_{k^{\ast 2n}}(t)xdt+\int_{s-r}^{s}{k^{\ast 2n}}(r-s+t)C_{k^{\ast 2n}}(t)xdt\right)dsdr \\
\\
=\displaystyle 2\int_0^{n\tau}W_{k^{\ast 2n}}g(r)\int_0^{n\tau}W_{k^{\ast 2n}}f(s)C_{k^{\ast 2n}}(r)C_{k^{\ast 2n}}(s)xdsdr = 2{\mathcal C}_{k}(f){\mathcal C}_{k}(g)x.
\end{array}
$$
\noindent we apply the composition property of convoluted cosine functions, \cite[Theorem 2.4]{K2}.

To finish the proof consider $f\in \mathcal{D}_{k^{\ast \infty}}$, $supp(f)\subset [0,n\tau]$ and $x\in X$. By  \cite[Lemma 2.8 (i)]{KLM} we have $W_{k^{\ast n}}(f'')= (W_{k^{\ast n}})'' $   and  according to  Definition \ref{cos-con} (ii) we have
\begin{eqnarray*}
A {\mathcal C}_k(f)x&=&A\int_0^{n\tau}(W_{k^{\ast n}}f)''(t)\int_0^{t}(t-s)C_{k^{\ast n}}(s)xdsdt\\
&=&
\int_0^{n\tau}W_{k^{\ast n}}f''(t)\left(C_{k^{\ast n}}(t)x-\int_0^t{k^{\ast n}}(s)dsx\right)dt\\
&=&{\mathcal C}_{k^{\ast n}}(f'')x+
\int_0^{n\tau}W_{k^{\ast n}}f'(t){k^{\ast n}}(t)dtx={\mathcal C}_{k^{\ast n}}(f'')x+f'(0)x,
\end{eqnarray*}
for $x\in X$ and we conclude the proof.
\end{proof}

\begin{remark}\label{notas}{\rm (i) In the conditions of Theorem \ref{main2}, now take $l\in L^1_{loc}(\mathbb{R}^+)$ with $0\in \hbox{supp}(l)$. By  \cite[Lemma 4.4]{K1}, the family  $((l\ast C_{k})(t))_{t\in[0,\tau]}$ is a $ k\ast l$-convoluted cosine function. Then one may prove that $${\mathcal C}_{k\ast l}(f)=
{\mathcal C}_k(f), \qquad  f \in {\mathcal D}_{(k\ast l)^{\ast\infty}}.$$

\medskip

(ii) When the operator $A$ generates a global convoluted cosine function $(C(t))_{t\ge 0}$, the homomorphism ${\mathcal C}_k$ is defined from ${\mathcal D}_k$ to ${\mathcal B}(X)$ by
$$
{\mathcal C}_k(f)x=\int_0^\infty W_kf(t)C_k(t)xdt, \qquad x\in X, \qquad f\in {\mathcal D}_k,
$$
see \cite[Theorem 6.5]{KLM}. If the family  $(C(t))_{t\ge 0}$ is exponentially bounded, then the homomorphism  ${\mathcal C}_k$  is extended to a  bounded Banach algebra homomorphism, see \cite[Theorem 6.7]{KLM}.

\medskip

(iii) Note that in fact the Theorem \ref{main2} motivates the following class of distribution cosine functions. Let $k\in L^{1}_{loc}(\mathbb{R}^{+})$ such that $0\in\textrm{supp}(k)$.  A  linear and continuous map $ {\mathcal C}_k: \mathcal{D}_{k^{\ast\infty}} \mapsto \mathcal{B}(X)$ is said a {\it $k$-distribution cosine function}, in short $k$-(DCF), if  satisfies the following conditions.

\begin{itemize}
\item[(k1)] ${\mathcal C}_k(\phi\ast_c \psi)={\mathcal C}_k(\phi){\mathcal C}_k(\psi)$ for $\phi, \psi\in  \mathcal{D}_{k^{\ast\infty}}.$
\item[(k2)] $\cap \{ \ker({\mathcal C}_k(\theta))\,\, \vert \,\, \theta \in  \mathcal{D}_{k^{\ast\infty}}\}=\{0\}$.
\end{itemize}

\medskip

For a given $k$-(DCF) ${\mathcal C}_k$,  the generator   $(A, D(A))$ of ${\mathcal C}_k$ is defined by
\begin{eqnarray*}
D(A):&=& \{ x\in X \vert \hbox{ exists } y \in X \hbox{ such that } {\mathcal C}_k(\theta)y= {\mathcal C}_k(\theta'')x+\theta'(0)x \hbox{ for any } \theta\in  \mathcal{D}_{k^{\ast\infty}}\};\\
A(x)&=&y, \qquad x\in D(A).
\end{eqnarray*}
The operator  $(A, D(A))$ is well-defined, closed, linear  and
${\mathcal C}_k(\mathcal{D}_{k^{\ast\infty}})X\subset D(A)$ with $A{\mathcal C}_k(\phi)x={\mathcal C}_k(\phi'')x+\phi'(0)x$ for any $\phi \in \mathcal{D}_{k^{\ast\infty}} $ and $x\in X$ (we use Proposition \ref{propi}(v)).

\noindent By Theorem \ref{main2}, a local convoluted cosine function defines a $k$-distribution cosine function.

A very important case is when $\mathcal{D}_k= \mathcal{D}_+$. In this condition we recover the definition of almost-distribution cosine function given in \cite[Definition 5]{M0}. In fact the equivalence between almost-distribution cosine functions, distribution cosine functions (introduced by Kosti\'c in \cite{KTW}) and $n$-times integrated cosine functions (and other spectral conditions) were proven in \cite[Theorem 3.5]{KMTW}.

\medskip

(iv) Some connections between  $k$-convoluted cosine function and  ultradistributions and hyperfunction sines have been treated in \cite[Section 5]{[KP2]} and \cite[Section 3.6.3]{ko}. In particular if $A$ is the generator of an exponentially bounded $k$-cosine function for certain $k$, then there is a ultradistribution fundamental solution of $\ast$-class for $\pm iA$ (\cite[Theorem 20]{[KP2]}). Note that the relationship between this result and Theorem \ref{main2} may be a matter of further investigations, see  \cite[Remark 14]{[KP2]}.

}
\end{remark}

\section{Examples and final comments }
\setcounter{theorem}{0}
\setcounter{equation}{0}

In this section we consider different examples of  convoluted cosine functions which have appeared in the literature.  Our results are applied in these examples to illustrate its importance.

\subsection{The Laplacian on $L^p(\R^N)$} \cite{EK,[Hi], K} It is known that the Laplacian $\Delta$ is the generator of an $\alpha$-times integrated semigroups in $L^p(\R^N)$ with $1<p<\infty$ and $N\ge 1$, $(C_\alpha(t))_{t\ge 0}\subset  {\mathcal B}(L^p(\R^N))$, if and only if $\alpha \ge (N-1)\left\vert {1\over p}-{1\over 2}\right\vert$, and
$$
\Vert C_\alpha (t)\Vert \le Ct^\alpha, \qquad t\ge 0,
$$
see \cite[Theorem 4.4]{[Hi]}, \cite[Proposition 3.2]{EK} and \cite[Section 5]{K}. In this case the map  ${\mathcal C}_\alpha: {\mathcal D}_+\to {\mathcal B}(L^p(\R^N))$ (Remark \ref{notas} (ii)) extends to a Banach algebra homomorphism ${\mathcal C}_\alpha: {\mathcal T}_\alpha(t^\alpha) \to {\mathcal B}(L^p(\R^N))$ where ${\mathcal T}_{\alpha}(t^\alpha)$ is the completion of
${\mathcal D}_+$ in the norm
$$
\Vert f\Vert_{\alpha }:=\int_0^\infty\vert W^\alpha f(t)\vert t^{\alpha}dt, \qquad f\in {\mathcal D}_+,
$$
where $W^\alpha$ is the Weyl derivation of order $\alpha$, see \cite[Theorem 4]{M0}. Other examples of global $\alpha$-times integrated cosine functions (generated by translation invariant operators) may be found in \cite[Theorem 5.2, Theorem 5.4]{EK}, \cite[Theorem 3.1]{K} and \cite[Theorem 4.2]{[Hi]}.

\subsection{The Laplacian on $L^2[0,\pi]$ with Dirichlet  boundary conditions} \cite[Example 1]{[KP2]}, \cite[Example 2.1.8.1]{ko}  The operator $-\Delta$ on $L^2[0,\pi]$ with Dirichlet  boundary conditions generates a exponentially bounded $\kappa$-convoluted cosine function $(C_{\kappa}(t))_{t\ge 0}$ where $\kappa$ is given in Example \ref{function} (v). By Remark \ref{notas} (ii), there exists an algebra homomorphism ${\mathcal C}_\kappa: {\mathcal D}_\kappa \to {\mathcal B}(L^2[0,\pi])$; in fact  it extends to a Banach algebra homomorphism ${\mathcal C}_\kappa: {\mathcal T}_\kappa(e_\beta) \to {\mathcal B}(L^2[0,\pi])$ where ${\mathcal T}_\kappa(e_\beta)$ is the completion of
${\mathcal D}_\kappa$ in the norm
$$
\Vert f\Vert_{\kappa, e_{-\beta}}:=\int_0^\infty\vert W_\kappa f(t)\vert e^{\beta t}dt, \qquad f\in {\mathcal D}_\kappa,
$$
for some $\beta >0$, see  \cite[Corollary 6.6]{KLM}. Moreover, the fractional power $\Delta^{2^n}$ generates an exponentially bounded $\kappa_{n+1}$-convoluted cosine function for suitable kernel $\kappa_{n+1}$ (\cite{ko,[KP2]}). Similar results for  $n\in \N$ are obtained following previous discursion.

\subsection{Multiplication operator in $L^p(\R)$} \cite[Section 6, Example 3]{[KP2]} Take $X=L^p(\R)$  with $1\le p\le \infty$ and we consider the multiplication operator $A$ with the maximal domain in $L^p(\R)$,
$$
Af(x)=(x+ix^2)^2f(x), \qquad x\in \R, f\in L^p(\R).
$$
The operator $A$ is not the generator of any (local) integrated cosine function in $L^p(\R)$ for $1\le p\le \infty$. However $A$ is the generator of a local $K_\delta$-convoluted cosine function $(C_{K_\delta}(t))_{t\in[0, \tau)}$ given by
$$
C_{K_\delta}(t)f(x):={1\over 2\pi i}\int_\Gamma{\lambda e^{\lambda t-\lambda^\delta}\over \lambda^2-(x+ix^2)^2}d\lambda f(x), \qquad f\in L^p(\R),\, x\in \R,\, t\in [0,\tau),
$$
where $K_\delta$ is given in Example \ref{function} (iii) for suitable $\delta \in (0,1)$, a complex path $\Gamma$ and for any $\tau\in (0,\infty)$. Then  $A$ is the generator of a global $K_\delta$-convoluted cosine function $(C_{K_\delta}(t))_{t\in[0, \infty)}$ and  we apply the Remark \ref{notas} (ii) to define an algebra homomorphism ${\mathcal C}_{K_\delta}: {\mathcal D}_{K_\delta}\to {\mathcal B}(X)$. Note that ${\mathcal D}_{K_\delta}\varsubsetneq{\mathcal D}_+$ (in other case $A$ generates a local integrated cosine function, see Remark \ref{notas} (iii)).

\subsection{Multiplication operator in $L^p(\R^+)$}  \cite[Example 6.2]{KTW} Take $X=L^p(\R^+)$  with $1\le p\le \infty$ and we consider the multiplication operator $A$ with the maximal domain in $L^p(\R)$,
$$
Af(x)=(x+ie^x)^2f(x), \qquad x>0, f\in L^p(\R).
$$
The operator $A$ is the generator of (local) $1$-integrated cosine function $(C_1(t))_{t\in[0,1]}$ in $L^p(\R^+)$ for $1\le p\le \infty$, given by
$$
(C_1(t)f)(x)={\sinh((x+ie^x)t)\over x+ie^x}f(x), \qquad t\in [0,1], \,\, x>0, \,\, f\in L^p(\R^+),
$$
for $0\le t<1 $ and $\sup_{t\in [0,1]}\Vert C_1(t)\Vert \le 1$ for $t\in [0,1]$. By Theorem \ref{exten}, the operator $A$ generates a $n$-times integrated cosine function $(C_n(t))_{t\in[0,n]}$ in $L^p(\R^+)$ for $1\le p\le \infty$ and a distribution cosine function ${\mathcal C}_\alpha:{\mathcal D}_+\to {\mathcal B}(X)$.

\subsection{Multiplication operator in $\ell^2$}  \cite[Example 1]{M1}, \cite[Example 1.2.6]{[MF]}
  Let $\ell^2$ be the Hilbert space of all
 sequences $x=(x_m)_{m\in \mathbb{N}}$ such that
 $
 \sum_{m=1}^\infty\vert x_m\vert^2<\infty,
 $
with the usual norm  $\Vert x\Vert:= \left(\sum_{m=1}^\infty\vert
x_m\vert^2\right)^{1\over 2}$. Take $T>0$ and define
$$
a_m={m\over T}+ i\left(\left({e^m\over m}\right)^2-\left({m\over
T}\right)^2\right)^{1\over 2}, \quad m\in \N,
$$
where $i^2=-1$. For any $\alpha >0$ let  $(C_\alpha (t))_{t>0}$ be
defined by
$$
C_\alpha (t)x=\left({1\over \Gamma(\alpha )} \int_0^t(t-s)^{\alpha
-1}\cosh({a_m s})x_m ds\right)_{m=1},
$$
for $x\in  \ell^2.$ Then
$(C_\alpha(t))_{t\in[0, \alpha T)}$ is a local $\alpha $-times
integrated cosine function on $\ell^2$ such that
 $(C_\alpha (t))_{t\in[0,\alpha T)}$ cannot be extended to $t\ge \alpha
 T$, in fact $$C_\alpha(t)={U_\alpha (t)+U_\alpha (-t)\over 2}, \quad t\in[0, \alpha T),$$
 where $(U_\alpha(t))_{t\in [0, \alpha T)}$ are local $\alpha$-integrated semigroups, see \cite[Example 1]{M1}.
 By Theorem \ref{exten} $(C_{n\alpha} (t))$ may be defined to $t<n\alpha T$.


\end{document}